\newtheorem{thm}{Theorem}
\newtheorem*{thm*}{Theorem}
\newtheorem{lemma}{Lemma}[section]
\newtheorem{rmk}[lemma]{Remark}
\newtheorem{claim}[lemma]{Claim}
\def\R{\mathbb{R}}
\def\C{\mathbb{C}}
\def\be{\begin{equation} }
\def\ee{\end{equation} }
\DeclareMathOperator{\Tr}{\mathrm{Tr}}
\def \E#1{\mathbb {E} \left[ #1 \right] }    
\def \Pr#1{\mathbb {P} \left\{ #1 \right\} } 
\def \ket #1 {| #1 \rangle}
\def \bra #1 {\langle  #1 |}
\def\vp {\varphi}
\DeclareMathOperator{\F}{F}
\DeclareMathOperator{\op}{op}
\numberwithin{equation}{section}
\begin{document}

\title{Matrix regularizing effects of  Gaussian perturbations}

\author{
Michael Aizenman\textsuperscript{1}\quad
Ron Peled\textsuperscript{2} \quad
Jeffrey Schenker\textsuperscript{3} \\
Mira Shamis\textsuperscript{4,5} \quad
 Sasha Sodin\textsuperscript{4,2} }
\footnotetext[1]{Departments of Mathematics and Physics, Princeton
University, Princeton, NJ 08544, USA.
}
\footnotetext[2]{School of Mathematical
Sciences, Tel Aviv University, Tel Aviv, 69978, Israel.
}
\footnotetext[3]{Department of Mathematics, Michigan State
University, East Lansing, MI 48824, USA.
}
\footnotetext[4]{School of Mathematical
Sciences, Queen Mary University of London, London E1~4NS, United Kingdom.}
\footnotetext[5]{Department of Mathematics, The
Weizmann Institute of Science, Rehovot 7610001, Israel.
}
\maketitle

\begin{abstract}
The addition of noise has a regularizing effect on Hermitian matrices.   This effect is studied here for  $H=A+V$, where $A$ is the base matrix and $V$ is sampled from the GOE or the GUE random matrix ensembles.   
We bound the mean number of eigenvalues of $H$ in an interval, and present 
tail bounds for the distribution of the Frobenius and operator norms of $H^{-1}$ and for the distribution of the norm of $H^{-1}$ applied to a fixed vector.  The bounds are uniform in  $A$ and exceed the actual suprema by no more than multiplicative constants. 
The probability of multiple eigenvalues in an interval is also estimated.  
\end{abstract}

\section{Introduction}

It is often the case that disorder has a regularizing 
 effect
on the spectrum of an Hermitian matrix.
Recall the Wegner
estimate \cite{W3}, which express the regularizing 
 effect
of diagonal disorder, and which is central in the spectral
analysis of random operators.    The  estimate was formulated for  matrices of the form
\be \label{eq:V_D}
A +  V^{\text{diag}} \, ,
\ee
where $A$ is Hermitian and $V^{\text{diag}}$ is diagonal with entries independently sampled from a bounded probability density $\rho$ on $\R$. For such $N\times N$ matrices, one has uniformly in $A$:
\begin{equation}\label{weg_dos}
\E{\mbox{\#\{eigenvalues of $(A + V^{\text{diag}})$ in $I$\}}}  \leq   \| \rho \|_\infty \,  N \, |I|\quad \mbox{for  any interval $I\subset \R$~,}
\end{equation}
where $\|\rho\|_\infty$ is the essential supremum of $\rho$,
and $|I|$ the Lebesgue measure of $I$. The following related estimate is also valid:
\begin{equation}
\Pr{\left|  (A +  V^{\text{diag}})^{-1}_{j\, j} \right|  > t}  \leq  \frac{\| \rho \|_\infty }{t} \quad \qquad \mbox{for all $j = 1,...,N$~.}  \label{weg_res}
\end{equation}

Presented here are somewhat analogous bounds for  matrices of the form
\be
H \ = \ A^{\text{sym}} + V^{\text{GOE}} \mbox{\qquad or  \qquad $H \ = \ A^{\text{Herm}} + V^{\text{GUE}} $ }\, ,
\ee
the first case concerning a real symmetric  base matrix $ A^{\text{sym}}$ perturbed by a random matrix $V^{\text{GOE}} $ sampled from the Gaussian Orthogonal Ensemble,
and  the second case concerning an Hermitian base matrix  perturbed by a random matrix
sampled from the Gaussian Unitary Ensemble.  The superscripts, which are displayed here for clarity, will often be omitted.

The invertibility properties of $H=A+V$ are
quantified in several ways: i) tail bounds for the distribution of the norm of
$H^{-1}\vp $ when $\vp$ is a fixed vector, ii)  corresponding bounds for the Frobenius and operator norms of
$H^{-1}$, iii) a bound on the expected number of eigenvalues
of $H$ in an interval.   
The bounds are uniform in $A$ and exceed the actual suprema by no more than multiplicative constants, as can be seen by considering the case $A=0$ 
(cf.\ Section~\ref{sec:disc}).\\ 

To state the results precisely we first recall the definitions of
the invariant ensembles.   These consist of Hermitian matrices of the form
\begin{equation}\label{X}
  V=\frac{X + X^*}{\sqrt{2N}},
\end{equation}
where $X$ is an $N\times N$ matrix with independent  standard real Gaussian entries in case of GOE, or independent standard complex Gaussian entries in case of GUE,
and the asterisk  indicates Hermitian conjugation.
In both cases the probability distribution of $V$ is of density proportional to
\[ \exp\left\{ - \frac{\beta N}{4} \mathrm{tr}\, V^2\right\} \]
with respect to the Lebesgue measure on matrices of the corresponding symmetry: real symmetric (GOE,  with $\beta = 1$) or complex Hermitian (GUE, with $\beta = 2$).   The distributions are invariant under conjugation by the corresponding class of unitary matrices
(cf.  \cite{AGZ,Mehta,PShch},  where various aspects of the invariant Gaussian ensembles are discussed).  \\

Throughout we write $\|\vp\|$ for the Euclidean norm of a vector $\vp$,
and for a matrix $R$ write $\|R\|_{\F} = \sqrt{\Tr\,R R^*}$  for the Frobenius (Hilbert-Schmidt) norm and
$\|R\|_{\op} = \max_{\vp \neq 0} \|R\vp\|/\|\vp\|$ for the operator norm.   The following  pair of theorems states our main results.

\begin{thm}\label{thm:main}
\hfill
\begin{tabbing}
\emph{If either:}  \= $A$ is
  an $N\times N$ real symmetric \= matrix, \= $\vp\in\R^N$, and \= $V$ is sampled from  GOE,  \\
\emph{or:} \> $A$ is
  an $N\times N$ Hermitian \> matrix,  \> $\vp\in\C^N$, and \> $V$ is sampled from  GUE,  \\
\end{tabbing}
\vspace{-.5cm}
then  the following bounds apply to the matrix  $H:=A+V$, with a constant
$C<\infty $ which is uniform in  $N$, $A$, and $\vp$:
\begin{enumerate}
  \item (Fixed vector)  for all $t\geq 1$,
  \begin{equation}\label{eq:vec}
    \mathbb{P} \left\{ \| H^{-1} \vp \| \geq t \sqrt{N} \|\vp\| \right\} \leq \frac{C}{t} \, ,
  \end{equation}
  \item (Frobenius and operator norms)  for all $t\geq 1$,
  \begin{equation}\label{eq:frob}
\mathbb{P} \left\{ \| H^{-1} \|_{\op} \geq tN \right\}\  \leq
\     \mathbb{P} \left\{ \| H^{-1} \|_{\F} \geq t N \right\} \  \leq \  \frac{C}{t}\,,
  \end{equation}
  \item (Mean density of states) for any interval
  $I\subset\R$,
  \begin{equation}\label{eq:dos}
 \E { \# \left\{\mbox {\! eigenvalues of $H$ in $I$\! }\right\} }  \  \leq \   C   N|I|~.
  \end{equation}
\end{enumerate}
\end{thm}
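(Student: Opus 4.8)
The three bounds are intertwined, and I would prove them together by induction on $N$, handling the GOE and GUE cases in parallel (the only difference being real versus complex Gaussians); the density--of--states bound \eqref{eq:dos} provides a finite ``local density of states'' that feeds the other two, and \eqref{eq:vec} for dimension $N-1$ feeds \eqref{eq:vec} and \eqref{eq:dos} for dimension $N$. First the reductions. By the conjugation invariance of GOE/GUE, $UHU^{*}$ has the same law as $(UAU^{*})+V$, and $(UHU^{*})^{-1}(U\vp)=U(H^{-1}\vp)$ preserves Euclidean norms; hence in \eqref{eq:vec} one may take $\vp=e_{1}$. Write $H$ in block form with scalar corner $h_{11}=A_{11}+V_{11}$, first column $h$ (coordinates $2,\dots,N$), and lower block $H'$; then $(h_{11},h)$ is independent of $H'$, with $h_{11}\sim N(A_{11},2/N)$ and, conditionally, $h$ Gaussian with mean the first column of $A$ and covariance $\tfrac1N I$. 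The Schur complement formula gives $(H^{-1})_{11}=1/\xi$ with $\xi=h_{11}-h^{*}(H')^{-1}h$, and
\[
 H^{-1}e_{1}=\frac1\xi\bigl(e_{1}-(H')^{-1}h\bigr),\qquad \|H^{-1}e_{1}\|^{2}=\frac{1+\|(H')^{-1}h\|^{2}}{\xi^{2}} .
\]
The one elementary probabilistic input is anticoncentration of the corner entry: since $h_{11}$ has density $\le\sqrt{N/4\pi}$ and is independent of $(h,H')$, conditionally on $(h,H')$ the variable $\xi$ is Gaussian with density $\sqrt{N/4\pi}\,\exp\{-\tfrac N4(A_{11}-h^{*}(H')^{-1}h)^{2}\}$ at any point; the Gaussian factor, not merely the bound $\sqrt{N/4\pi}$, will be essential.

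\textbf{Part (i), the crux.} Put $\zeta=\xi/\sqrt{1+\|(H')^{-1}h\|^{2}}$, so that $\{\|H^{-1}e_{1}\|\ge t\sqrt N\}=\{|\zeta|\le t^{-1}N^{-1/2}\}$, and it is enough to bound the density of $\zeta$ near $0$ by $C\sqrt N$. Integrating out $h_{11}$ reduces this to the estimate
\[
 \mathbb{E}\Bigl[\sqrt{1+\|(H')^{-1}h\|^{2}}\;\exp\bigl\{-\tfrac N4\bigl(A_{11}-h^{*}(H')^{-1}h\bigr)^{2}\bigr\}\Bigr]\ \le\ C .
\]
The difficulty is that $\|(H')^{-1}h\|^{2}=h^{*}(H')^{-2}h$ is typically of moderate size but can be as large as $\Theta(N)$ --- precisely when $H'$ has an eigenvalue of modulus $\asymp1/N$; in that event the contribution of the corresponding eigendirection to $h^{*}(H')^{-1}h$ is only $O(1)$ while its contribution to $h^{*}(H')^{-2}h$ is $\Theta(N)$, so the Gaussian weight, which localizes $h^{*}(H')^{-1}h$ to a window of width $\asymp N^{-1/2}$, supplies a compensating factor $\asymp N^{-1/2}$. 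To make this quantitative I would diagonalize $(H')^{-1}$, split off the eigendirection(s) of $H'$ closest to $0$, condition on $H'$ and on the coordinates of $h$ in the remaining directions, and estimate the resulting one--dimensional Gaussian integral in the distinguished coordinate $w_{*}$; the key algebraic point is that the large factor $|\lambda_{*}(H')|^{-1}$ coming from the numerator is exactly cancelled by the $|\lambda_{*}(H')|$ width of the region where $|\lambda_{*}(H')|^{-1}|w_{*}|^{2}$ matches $A_{11}$ minus the other terms. Bounding those other terms (where still smaller eigenvalues of $H'$ may enter) is where the inductive hypotheses for dimension $N-1$ are used: $\|(H')^{-1}(Ae_{1})\|\lesssim\sqrt N\,\|Ae_{1}\|$ from \eqref{eq:vec}, $\|(H')^{-1}\|_{\F}\lesssim N$ from \eqref{eq:frob}, and a bounded local density of states from \eqref{eq:dos}. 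I expect this step --- getting the cancellation exactly right --- to be the main obstacle, since the tail in \eqref{eq:vec} is $\asymp 1/t$ with no slack.

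\textbf{Parts (ii) and (iii).} Granting \eqref{eq:dos} in dimension $N$, \eqref{eq:frob} is routine: $\|H^{-1}\|_{\op}\le\|H^{-1}\|_{\F}$ and $\|H^{-1}\|_{\F}^{2}=\sum_{j}\lambda_{j}(H)^{-2}$; on $\{|\lambda_{\min}(H)|\ge r\}$, estimating the eigenvalues of modulus in $[r,1]$ by a dyadic decomposition and applying \eqref{eq:dos} to each shell gives $\mathbb{E}\bigl[\|H^{-1}\|_{\F}^{2}\mathbb{1}_{\{|\lambda_{\min}|\ge r\}}\bigr]\le CN/r$, so Markov on that event with $r\asymp 1/(tN)$, together with $\mathbb{P}\{|\lambda_{\min}(H)|<r\}\le\mathbb{E}[\#\{\text{ev in }[-r,r]\}]\le CNr\le C/t$ from \eqref{eq:dos}, yields \eqref{eq:frob}. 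For \eqref{eq:dos} I would again use the Schur complement: for every $\eta>0$ one has $\#\{\text{ev of }H\text{ in }[-\eta,\eta]\}\le 2\eta\,\sum_{j}\Im(H-i\eta)^{-1}_{jj}$, and $\Im(H-i\eta)^{-1}_{jj}$, after removing the single Gaussian corner entry, takes the form $\Im\widetilde G\,/\,\bigl|1+\sqrt{2/N}\,g\,\widetilde G\bigr|^{2}$ with $g$ a standard Gaussian independent of $\widetilde G$. The point is to bound $\mathbb{E}[\Im(H-i\eta)^{-1}_{jj}]$ by an absolute constant --- the crude one--variable anticoncentration only gives $O(\sqrt N)$ --- and for this one needs, as in Part (i), the inductive control of the quadratic form $h^{*}(H'-i\eta)^{-1}h$: its concentration around $\tfrac1N\Tr(H'-i\eta)^{-1}+(Ae_{1})^{*}(H'-i\eta)^{-1}(Ae_{1})$, whose two summands are controlled by the $(N-1)$--dimensional versions of \eqref{eq:dos} and \eqref{eq:vec}. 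Letting $\eta\downarrow0$ then gives \eqref{eq:dos}. The base case $N=1$ is immediate, so the induction closes once the estimate displayed in Part (i) and its analogue for the diagonal resolvent entries are established.
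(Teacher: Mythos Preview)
Your approach diverges substantially from the paper's, and the divergence matters. The paper does \emph{not} use induction on $N$. Instead, it conditions on the lower block $H'$ (so that your $Q=(H')^{-1}$ becomes an arbitrary fixed symmetric matrix) and proves directly, via a Fourier--analytic argument on the joint characteristic function of $\|Q(g+b)\|^2$ and $(g+b)^tQ(g+b)$, the key lemma: for any fixed nonzero real symmetric $Q$, any $b$, any $a$, and $t\ge1$,
\[
\mathbb{P}\!\left\{\frac{\|Q(g+b)\|}{|(g+b)^tQ(g+b)-a|}\ge t\right\}\le\frac{C}{t}.
\]
Your displayed expectation bound in Part~(i) is essentially this lemma recast as a density estimate for $\zeta$ at $0$, so you have correctly identified the crux; but your proposed proof---split off the dominant eigendirection of $Q$ and control the rest via the inductive hypotheses on $H'$---is not how the paper proceeds, and it carries a genuine risk. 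The inductive hypotheses you invoke (\eqref{eq:vec} and \eqref{eq:frob} for $H'$) are $1/t$ tail bounds and hence give no finite first moments; the quantities $\|(H')^{-1}(Ae_1)\|$ and $\|(H')^{-1}\|_{\F}$ do \emph{not} have the moment control your sketch presumes, and there is no evident mechanism preventing the constant from growing through the induction. The paper sidesteps all of this by never appealing to any property of $H'$ beyond its symmetry: the lemma is proved for deterministic $Q$ by computing the characteristic function, shifting the $\xi$-contour into the complex plane, and bounding the resulting integral via H\"older (with a small case analysis according to whether one or two eigendirections of $Q$ dominate---this is the paper's analogue of your ``split off the dominant eigendirection'', but done analytically rather than probabilistically).

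The logical structure is also different. The paper proves \eqref{eq:vec} first (from the lemma above, plus the elementary $g_0$-anticoncentration you mention), then derives \eqref{eq:frob} from \eqref{eq:vec} by a short argument: if $\vp$ is uniform on the sphere and independent of $H$, then $\|H^{-1}\vp\|\ge\tfrac{1}{10\sqrt{N}}\|H^{-1}\|_{\F}$ with probability at least $1/2$ (a Laplace-transform computation), so $\mathbb{P}\{\|H^{-1}\|_{\F}\ge tN\}\le 2\,\mathbb{P}\{\|H^{-1}\vp\|\ge t\sqrt{N}/10\}$. Finally \eqref{eq:dos} follows from the operator-norm half of \eqref{eq:frob} by exactly the partition argument you use. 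Your route---\eqref{eq:dos} from an inductive resolvent argument, then \eqref{eq:frob} from \eqref{eq:dos} by dyadic shells---is sound for the \eqref{eq:dos}$\Rightarrow$\eqref{eq:frob} step, but your derivation of \eqref{eq:dos} again leans on concentration of $h^*(H'-i\eta)^{-1}h$, which requires Hilbert--Schmidt control of $(H'-i\eta)^{-1}$ with the same heavy-tail problem. In short: the reduction you perform is correct and the obstacle you flag is the obstacle, but the paper's resolution is to prove the ratio-of-quadratic-forms lemma for \emph{deterministic} $Q$ by characteristic-function methods, rather than to chase the randomness of $H'$ through an induction.
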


The key for the three statements listed in Theorem~\ref{thm:main} is the single-vector tail estimate
(\ref{eq:vec}). In our approach the two other bounds are concisely derived from it. The main technical step in
the proof of (\ref{eq:vec}) is (\ref{eq:lmain}) of Lemma~\ref{lemma:main}. 
Estimates of similar nature (concentration bounds on quadratic forms)
have also played a role in the work of Maltsev
and Schlein \cite{MSch1,MSch2} and Vershynin \cite{V}.  
The  lemma is proved below by a  Fourier-analytic method.

Because of the similarity between (\ref{eq:dos}) and (\ref{weg_dos}) of \cite{W3}, the former bound may be referred to as a Wegner-type estimate (though the similarity of the bounds does not extend to their derivations).
For GUE perturbations of Hermitian matrices a  bound of the form (\ref{eq:dos}) on the density of states, from
which (\ref{eq:frob}) can be deduced for that case, was  also recently proved by Pchelin \cite{Pch}, building on
the work of Shcherbina \cite{Shch}.  

For the next statement, we denote, for  Borel sets  
$B \subset \R$ and Hermitian matrices $H$:
\begin{equation*}
  \mathcal{N} (B) = \mathcal{N}(B; H) := \text{number of eigenvalues of $H$ in $B$}.
\end{equation*}
\begin{thm}\label{thm:Minami_like}
  Let $H = A + V$ be as in Theorem~\ref{thm:main}. Then there is a
  constant $C<\infty$, uniform in $A$ and $N$, such that for every $1\le k\le N$ and every interval $I \subset \mathbb{R}$,
  \begin{equation}\label{eq:Minami_probability}
    \mathbb{P}\left\{\mathcal{N}(I)\ge k\right\} \leq  \frac{\left(C\,  |I| \, N\right)^k}{k!}.
  \end{equation}
 Moreover, for any $k$-tuple of intervals
   $I_1, \cdots, I_k \subset\R$,
  \begin{equation}\label{eq:Minami_expectation}
    \mathbb{E}\left[\mathcal{N} (I_1)(\mathcal{N} (I_2)-1)_+\cdots(\mathcal{N} (I_k)-k+1)_+\right]\le
    \prod_{j=1}^k (C\, |I_j|\, N).
  \end{equation}
\end{thm}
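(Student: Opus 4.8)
The plan is to derive the probability bound \eqref{eq:Minami_probability} from the factorial-moment bound \eqref{eq:Minami_expectation}, and to obtain the latter from a $k$-vector strengthening of the resolvent estimate behind \eqref{eq:vec}.

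\emph{Reductions.} Taking $I_1=\dots=I_k=I$ in \eqref{eq:Minami_expectation}, the left-hand side is $k!\,\mathbb{E}\!\left[\binom{\mathcal N(I)}{k}\right]\ge k!\,\mathbb{P}\{\mathcal N(I)\ge k\}$, so \eqref{eq:Minami_probability} follows at once; it suffices to prove \eqref{eq:Minami_expectation}. Consider first a single interval $I=(E-\eta,E+\eta)$, so $2\eta=|I|$. Since $\bigwedge^{k}\mathbf 1_I(H)$ is the orthogonal projection onto $\bigwedge^{k}$ of the spectral subspace of $H$ over $I$, for every orthonormal basis $(e_n)_{n=1}^{N}$ one has the identity
\[
\mathcal N(I)\bigl(\mathcal N(I)-1\bigr)\cdots\bigl(\mathcal N(I)-k+1\bigr)\;=\;k!\sum_{n_1<\dots<n_k}\det\Bigl(\bigl\langle e_{n_a},\mathbf 1_I(H)\,e_{n_b}\bigr\rangle\Bigr)_{a,b=1}^{k}.
\]
Using the operator inequality $\mathbf 1_I(H)\preceq 2\eta\,\operatorname{Im}(H-E-i\eta)^{-1}$ (i.e.\ $\mathbf 1_{(-\eta,\eta)}(x)\le 2\eta^{2}/(x^{2}+\eta^{2})$) and monotonicity of $\det$ under compression of positive operators, each summand is at most $(2\eta)^{k}\det\bigl(\langle e_{n_a},\operatorname{Im}(H-E-i\eta)^{-1}e_{n_b}\rangle\bigr)_{a,b}$. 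Hence \eqref{eq:Minami_expectation} for a single interval reduces, after summing over the $\binom Nk$ subsets and using $k!\binom Nk(2\eta)^{k}C^{k}\le(2\eta CN)^{k}=(C N |I|)^{k}$, to the uniform bound
\[
(\star)\qquad \mathbb{E}\,\det\Bigl(\bigl\langle\varphi_a,\operatorname{Im}(H-E-i\eta)^{-1}\varphi_b\bigr\rangle\Bigr)_{a,b=1}^{k}\;\le\;C^{k}
\]
for all orthonormal $\varphi_1,\dots,\varphi_k$, all $E\in\mathbb R$ and all $\eta>0$, with $C$ independent of $N$ and $A$.

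\emph{The $k$-vector estimate $(\star)$ --- the crux.} The determinant in $(\star)$ equals $\eta^{k}\bigl\|(H-E-i\eta)^{-1}\varphi_1\wedge\dots\wedge(H-E-i\eta)^{-1}\varphi_k\bigr\|^{2}$, so $(\star)$ is the $k$-th exterior-power analogue of the single-vector resolvent control behind \eqref{eq:vec}; for $k=1$ it is the Wegner-type bound $\mathbb{E}\langle\varphi,\operatorname{Im}(H-E-i\eta)^{-1}\varphi\rangle\le C$, in the spirit of Lemma~\ref{lemma:main}. I would prove $(\star)$ by the same Fourier-analytic method, now carried out for a product of $k$ resolvents (equivalently for $\bigwedge^{k}$ of the resolvent): represent each factor $(H-E-i\eta)^{-1}$ as a Laplace--Fourier transform of $e^{isH}$, perform the Gaussian integration over $V$ via integration by parts (exploiting the invariance of the ensemble), and estimate the resulting $k$-fold oscillatory integral. \emph{This is the step I expect to be the main obstacle.} The antisymmetrization inherent in the wedge (equivalently, in the determinant) is essential: it furnishes the combinatorial cancellation that, upon summation over the $\binom Nk$ frames, yields the sharp power $(CN)^{k}$ rather than a larger one, and it prevents several of the $k$ factors from simultaneously resonating with a single near-eigenvalue. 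A crude substitute --- conditioning on the off-diagonal part of $V$ and invoking the classical rank-one Minami inequality in the diagonal Gaussian entries, whose densities are of size $\sim\sqrt N$ --- would only deliver $(\star)$ with $C^{k}$ replaced by $(C\sqrt N)^{k}$, losing a factor $N^{k/2}$; capturing the regularization by the \emph{whole} Gaussian matrix, as in the proof of \eqref{eq:vec}, while keeping track of the multilinear antisymmetric structure, is where the work lies.

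\emph{Several intervals.} For a $k$-tuple $I_1,\dots,I_k$ one first bounds, by choosing an eigenvalue of $H$ in $I_1$, then a distinct one in $I_2$, and so on,
\[
\prod_{j=1}^{k}\bigl(\mathcal N(I_j)-j+1\bigr)_{+}\;\le\;\sum_{\substack{n_1,\dots,n_k\ \mathrm{distinct}}}\ \prod_{i=1}^{k}\mathbf 1[\lambda_{n_i}\in I_i],
\]
so the left-hand side of \eqref{eq:Minami_expectation} is at most the integral over $I_1\times\dots\times I_k$ of the $k$-point intensity $\rho_k$ of the eigenvalue process. It then remains to show $\rho_k(x_1,\dots,x_k)\le (CN)^{k}$ uniformly in $A$; this is the density-of-states form of $(\star)$ with the $k$ test vectors attached to the $k$ separate energies $x_i$, proved by the same Fourier-analytic computation (for $k=1$ it is just $\rho_1\le CN$, which already follows from \eqref{eq:dos}). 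Here too the determinantal (antisymmetric) structure is precisely what encodes the distinctness constraint --- without it no product bound can hold, since $\operatorname{Im}\operatorname{Tr}(H-x-i\epsilon)^{-1}$ is dominated by a single nearby eigenvalue. Integrating over $I_1\times\dots\times I_k$ yields \eqref{eq:Minami_expectation}, completing the proof.
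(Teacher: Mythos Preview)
Your reductions are sound: \eqref{eq:Minami_probability} does follow from \eqref{eq:Minami_expectation}, the descending-factorial identity via $\bigwedge^k \mathbf 1_I(H)$ is correct, and the operator inequality $\mathbf 1_I(H)\preceq 2\eta\,\operatorname{Im}(H-E-i\eta)^{-1}$ together with monotonicity of principal minors legitimately reduces the single-interval case to $(\star)$. The gap is that $(\star)$ itself is not proved. You describe a plan---carry the Fourier-analytic argument of Lemma~\ref{lemma:main} over to $\bigwedge^k$ of the resolvent---but that lemma treats a ratio of two scalar quadratic forms in a single Gaussian vector; a uniform bound on the $k\times k$ determinant $\det\bigl(\langle\varphi_a,\operatorname{Im}(H-E-i\eta)^{-1}\varphi_b\rangle\bigr)$, valid for all $A$, all orthonormal frames, and all $\eta>0$, is a genuinely different and harder estimate, and nothing in the $k=1$ analysis indicates that the resulting $2k$-dimensional oscillatory integral can be controlled. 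You acknowledge this yourself (``the main obstacle''), and the several-interval extension is left at the same level of sketch. As written, the argument is a reduction to an unproved statement of comparable depth to the theorem.

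The paper bypasses $(\star)$ entirely, by a different and much lighter mechanism: induction on $k$ following Combes--Germinet--Klein. The two inputs are (i) the \emph{conditional} fixed-vector bound \eqref{eq:fixed_vector_bound_with_conditioning}, i.e.\ \eqref{eq:vec} still holds after conditioning on the compression $H_\varphi=P_{\varphi^\perp}HP_{\varphi^\perp}^*$, and (ii) Cauchy interlacing, which gives $(\mathcal N(I_j;H)-1)_+\le \mathcal N(I_j;H_\varphi)$. One partitions $I_1$ finely, converts the event $\{\mathcal N(J)\ge 1\}$ into a resolvent-norm event, introduces a uniformly random unit vector $\varphi$ via Lemma~\ref{l:qf1}, uses (ii) to replace the remaining factors by eigenvalue counts of $H_\varphi$, and then applies (i) to integrate out the $I_1$ factor conditionally on $H_\varphi$. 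Since $\sqrt{N/(N-1)}\,H_\varphi$ is again of the form $A'+V'$ in dimension $N-1$, the induction hypothesis closes the loop. Thus everything rests on the $k=1$ Wegner-type bound \eqref{eq:dos} and the already-proved conditional estimate; no new multilinear Fourier analysis is needed.
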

Continuing the comparison with bounds which are known for operators with random   potential, the case $k = 2$ of (\ref{eq:Minami_probability}) is reminiscent of the Minami bound for 
matrices $A+V^{\operatorname{diag}}$
with diagonal disorder, as in (\ref{eq:V_D}), for which it was established in \cite{Minami} that
\begin{equation}  \label{eq:Minami}
  \mathbb{P}\left\{\text{$(A + V^{\text{diag}})$ has at least $2$ eigenvalues in $I$}\right\} \leq  C \left(\| \rho \|_\infty \,  |I| \, N\right)^2 ~.
\end{equation}
This estimate was instrumental in Minami's proof of Poisson local eigenvalue statistics for
random Schr\"odinger operators in $\mathbb{Z}^d$ throughout the regime of Anderson localization.   
Extensions to $k > 2$ 
were subsequently presented  by Bellissard, Hislop, and Stolz \cite{BHS}, Graf and Vaghi \cite{GV},
and  Combes, Germinet, and Klein \cite{CGK}. In particular, our derivation of Theorem~\ref{thm:Minami_like} has benefitted from  the strategy of \cite{CGK}.

\paragraph{Applications}

The above bounds are useful for a number of problems in the theory
of random operators, particularly,  pertaining to random band and Wegner-type operators, some of which  will be discussed in \cite{PSSS}.
The estimate \eqref{eq:vec} plays a key role in the proof
of  localization at strong disorder for the Wegner $N$-orbital model, and some of its
variants, with conjecturally sharp dependence of the localization
threshold on the number of orbitals.  The bound \eqref{eq:dos}
enables density of state estimates for a class of models including the
Wegner orbital model and Gaussian band matrices.  Theorem~\ref{thm:Minami_like} is used to prove convergence
of the local eigenvalue statistics to the Poisson process in the regime of localization. In such applications
the sharp dependence of the above bounds on $N$ and $t$ is of value.

\medskip

The bounds discussed here are  of relevance also from other perspectives.
Effects on the spectrum of the addition of a symmetric random matrix
has been studied in light of applications in numerical analysis by  
Sankar, Spielman and Teng \cite{SST} (for the case of Gaussian random
matrices) and by 
Vershynin \cite{V} and Farrell and Vershynin \cite{FV} (for more general
distributions).
The addition of GOE/GUE and its infinite volume limit 
were studied by Dyson \cite{Dyson} in the context of stochastic evolution,
by Pastur \cite{P} in the framework of the limiting eigenvalue
distribution for deformed Wigner ensembles. The regularisation
effect in the infinite volume limit was considered in \cite{BV}
in the language of free convolution of Voiculescu
\cite{Voic}.

\paragraph{Relation with previous results}

$ $  In presenting some of the related previous results we
shall invoke the notion of density of states, and the following notation. 
For an $N\times N$ random matrix $H$,  the normalized average
$\nu(\cdot ;H) := N^{-1} \mathbb{E} \left[\mathcal{N}(\cdot; H) \right]$ (or just $\nu$)
is referred to as the
density of states  (DOS) measure.   When this measure is absolutely continuous,
i.e. of the form  $\nu(d{\mathcal E}) = \rho({\mathcal E}) \, d{\mathcal E}$,
 its  Radon density  $\rho({\mathcal E})$    is called the density of states function.
 In this notation, the bound~\eqref{eq:dos} asserts that the DOS
 measure $\nu(\cdot;H)$ of $H = A+V $ is absolutely continuous, and its
density $\rho({\mathcal E}; H)$ is bounded by a constant independent of $N$ and $A$.\\

 While the results presented here focus on  bounds which hold uniformly in the base matrix $A$,
related questions have been studied for sequences  $A_N$ of deterministic Hermitian matrices
of increasing size for which the density of state measures  $\nu_N$ converge
weakly to a limiting measure $\nu_\infty(d\mathcal{E})$.    Pastur~\cite{P} has shown that in such situations  the  perturbed operators  $A_N  + V_N^\text{GOE/GUE} $
(and more generally $A_N + V_N^\text{Wig}$, see below) have densities of states which converge weakly to a limit which can be determined from  $\nu_\infty$,
and which is absolutely continuous of density satisfying $\rho_\infty({\mathcal E}) \leq \pi^{-1}$ (c.f. the monograph \cite{PShch}).

There are also several results which rely on the Harish--Chandra formul{\ae} \cite{HC,BH1,BH2}, and thus apply
to GUE but not GOE perturbation. For the case
that $A_N$ are uniformly norm-bounded and the perturbation
is GUE it is a by-product
of the study of local eigenvalue statistics by T.~Shcherbina \cite{Shch,Shch2} 
that the Pastur law also holds in total variation distance. Thus one can conclude that
\[ \sup_{\| A_N\| \leq K} \sup_{\mathcal E} \rho({\mathcal E}; A_N +V_N^\text{GUE} ) \leq \frac{1}{\pi}+ o(1), \quad N \to \infty,\]
This
bound is similar to the GUE case of \eqref{eq:dos}, but it requires the deformation
to be bounded. For the case of possibly unbounded Hermitian matrix
perturbed by the GUE, Pchelin proved that
\[  \sup_N \sup_{A_N} \sup_{\mathcal E} \rho({\mathcal E}; A_N +V_N^\text{GUE} ) < \infty~, \]
i.e.\ our bound (\ref{eq:dos}) on the mean density of
states; his argument builds on  \cite{Shch}.

The above question was considered also in the more general setting obtained by replacing
GOE/GUE  by  Wigner matrices $V_N^\text{Wig}$, for which the entries above the main diagonal
are iid though not necessarily Gaussian.   Vershynin \cite{V} showed that in such case
\begin{equation}\label{eq:ver}
\sup_{\| A_N\| \leq K} \mathbb{P} \left\{ \| (A_N + V_N^\text{Wig})^{-1} \|_\text{op} \geq t N \right\} \leq \frac{C_K}{t^{1/9}} +
2 \exp(-N^{c_K})
\end{equation}
with constants $C_K, c_k>0$ depending only on $K$.  Vershynin's result
holds under very mild assumptions on the matrix entries; an
inspection of the proof shows that if the entries are themselves
regular (for example, have density bounded by $C\sqrt{N}$), the estimate holds
without the term $2 \exp(-N^{c_K})$. We also mention
that Nguyen \cite{Nguyen} showed
that for any $K > 0$ and
$b>0$  there exists $a>0$ so that
\begin{equation}\label{eq:Ng}
\sup_{\| A_N\| \leq N^K} \mathbb{P} \left\{ \| (A_N + V_N^\text{Wig})^{-1} \|_\text{op} \geq N^a \right\} \leq N^{-b}~.
\end{equation}
Upper bounds on the probability of two close eigenvalues were proved by Nguyen,
Tao and Vu \cite{NTV}.

Recently, universality of local eigenvalue statistics for deformed
Wigner ensembles was studied by O'Rourke and Vu \cite{OV},
Knowles and Yin \cite[Section 12]{KY} and Lee, Schnelli, Stetler, and Yau \cite{LSSY}.

\medskip\noindent

Among the results pertaining to $A_N=0$, that is
 concerning the density of state  of the Wigner matrices
without these being used as deformations of
 a base matrix, we mention only a few most relevant
to the current discussion.

One of the forms of the Wigner law asserts that if $V_N$ is sampled
from a Wigner ensemble of dimension $N\times N$ then
\begin{equation}\label{eq:wig}
\rho({\mathcal E}; V_N^{\text{Wig}}) \to \frac{1}{2\pi} \sqrt{(4-{\mathcal E}^2)_+}, \quad
N \to \infty
\end{equation}
in the weak sense \cite{AGZ,PShch}.  In the special cases of GOE and GUE, this
may be strengthened to uniform convergence \cite{Mehta}, yielding
\begin{equation}\label{eq:wigcor}
\sup_{\mathcal E} \rho({\mathcal E}, V_N^{\text{GOE/GUE}}) \leq \frac{1}{\pi} + o(1), \quad N
\to \infty.
\end{equation}
This implies a bound  similar  to  \eqref{eq:dos} but for $A_N=0$.\\

Maltsev and Schlein \cite{MSch2} proved that the Wigner law (\ref{eq:wig}) holds in
the topology of uniform convergence in $[-2+\delta, 2-\delta]$ (for
an arbitrary $\delta > 0$) for a class of Wigner matrices
the entries of which obey certain regularity
assumptions. Their results imply that (\ref{eq:wigcor})
with the restriction $|{\mathcal E}|<2-\delta$
holds for this class of Wigner matrices. The paper \cite{MSch2} builds on earlier work by
Erd\H{o}s, Schlein, and Yau \cite{ESY} and Maltsev and Schlein
\cite{MSch1}, where it was shown that there exists an absolute
constant $C>0$ for which
\[ \mathbb{P} \left\{ \| (V_N^\text{Wig} - {\mathcal E})^{-1} \|_\text{op} \geq t N \right\} \leq \frac{C}{t},  \quad t\ge
1.\]

\section{Fixed vector bound}  \label{lem:fixed_vec}

In general terms, the  Wegner bound concerns  the inverse of a quantity which fluctuates due to the presence of random terms in $H$.   For the  bound \eqref{weg_res}  it suffices to focus on the fluctuations  resulting from the randomness in the single term
$V^{\text{diag}}_{jj}$.  However,  for the case considered here the contribution of any single diagonal term is too small  (by a factor of $\sqrt N$) for the claimed result.    Instead, our proof of the bound \eqref{eq:vec}, for a given $N\times N$ matrix $A$, and a given  vector $\vp$,  will focus on the fluctuations in $\| H^{-1} \vp \|$  due to the $N$ random variables which determine $V \vp$.     We start by reducing the claim to a technical estimate whose proof will be given separately, in Section~\ref{sec:tech}.   \\

\noindent {\bf The real (GOE) case.}   As the distribution of $V$ is
invariant under  orthogonal conjugations, and we aim at results which hold uniformly in $A$,  we may assume without loss of generality that $\vp = e_1$, the first vector of the standard basis in $\mathbb{R}^N$.  The
matrix $V$ has the form
\begin{equation}\label{eq:defbl}
V = \frac{1}{\sqrt{N}} \left(  \begin{array}{cc}
\sqrt{2} g_0   & g^t \\
g & W
\end{array}\right),\end{equation}
where $g_0\in\R$, $g\in\R^{N-1}$ and $W$ is an $(N-1)\times(N-1)$ symmetric
matrix,  $g_0$, $g$ and $W$ are independent,  and $g_0$ is a standard real Gaussian and
$g$ is a standard real Gaussian vector (i.e., with independent entries having the standard real Gaussian distribution).
Thus we may write
\begin{equation}\label{eq:A + V}
A + V  = \frac{1}{\sqrt{N}} \left(  \begin{array}{cc}
\sqrt{2} g_0 +a    & (g+b)^t \\
g+b & W + D
\end{array}\right),\end{equation}
for deterministic $a\in\R$, $b\in\R^{N-1}$ and $D$ an $(N-1)\times
(N-1)$ symmetric matrix.
Set
\begin{equation*}
  Q := (W + D)^{-1}.
\end{equation*}
Inverting using the
Schur--Banachiewicz formul{\ae}, we obtain
\begin{equation}
\frac{1}{\sqrt{N}} (A + V)^{-1} e_1 =
\frac{1}{\sqrt{2} g_0 + a - (g+b)^t Q (g+b)} \left( \begin{array}{c}
1 \\ -Q(g+b)
\end{array} \right).
\end{equation}
Therefore
\begin{multline}\label{eq:deformed_GOE_inverse_formula}
  \frac{1}{\sqrt{N}} \| (A + V)^{-1} e_1 \|
 = \frac{\sqrt{1 + \| Q (g+b)\|^2}}{|\sqrt{2} g_0 + a - (g+b)^t Q (g+b)|}\\
 \leq \frac{1 }{|\sqrt{2} g_0 + a - (g+b)^t Q (g+b)|} \ + \
\frac{\| Q (g+b)\|}{|\sqrt{2} g_0 + a - (g+b)^t Q (g+b)|}.
\end{multline}
For any deterministic $d$, and any  $t>0$,
\begin{equation*}
\mathbb{P} \left\{ \left| \frac{1}{\sqrt{2}g_0 + d} \right| \geq t
\right\} \leq \frac{1}{\sqrt{\pi} t},
\end{equation*}
therefore, first conditioning on $g$ and $Q$, one may conclude that
\begin{equation}\label{eq:vec1}
\mathbb{P} \left\{ \frac{1}{|\sqrt{2} g_0 + a - (g+b)^t Q (g+b)|}
\geq \frac{t}{2} \right\} \leq \frac{2}{\sqrt{\pi} t}.
\end{equation}
Combining \eqref{eq:deformed_GOE_inverse_formula} with \eqref{eq:vec1} one  arrives at the  key bound
\begin{equation} \label{eq:key}
  \mathbb{P} \left\{ \| (A + V)^{-1} e_1 \| \geq t \sqrt{N} \right\} \leq \frac{2}{\sqrt{\pi}t} + \mathbb{P} \left\{ \frac{\| Q (g+b)\|}{|\sqrt{2} g_0 + a - (g+b)^t Q
(g+b)|} \geq \frac{t}{2} \right\}.
\end{equation}
For the second term we have the following estimate, whose proof is deferred to Section~\ref{sec:tech}.

\begin{lemma}\label{lemma:main}
Let $Q$ be a (non-random) non-zero real symmetric matrix, and let $g$ be a standard real Gaussian vector of the same
dimension. Then, for any real vector $b$ and any real number $a$,
\begin{equation}\label{eq:lmain}
 \mathbb{P} \left\{ \frac{\|Q(g+b)\|}{|(g+b)^tQ(g+b)-a|} \geq t \right\} \leq \frac{C}{t}, \quad t\geq1,  
 \end{equation}
for some absolute constant $C$.
\end{lemma}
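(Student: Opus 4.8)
The plan is to diagonalize $Q$ and reduce the bound to a statement about a ratio of two quadratic forms in independent Gaussians. Writing $Q = U \Lambda U^t$ with $U$ orthogonal and $\Lambda = \operatorname{diag}(\lambda_1,\dots,\lambda_n)$, the vector $h := U^t g$ is again a standard Gaussian vector, and with $c := U^t b$ the event in \eqref{eq:lmain} becomes
\begin{equation*}
  \frac{\|\Lambda(h+c)\|}{|(h+c)^t\Lambda(h+c)-a|}
  = \frac{\sqrt{\sum_j \lambda_j^2 (h_j+c_j)^2}}{\left|\sum_j \lambda_j (h_j+c_j)^2 - a\right|} \ \geq\ t.
\end{equation*}
The numerator is then controlled deterministically by $\max_j|\lambda_j| \cdot \|h+c\|$, while the real content of the lemma is a lower bound, with high probability, on the denominator — i.e.\ an anticoncentration estimate for the shifted quadratic form $S := \sum_j \lambda_j (h_j+c_j)^2$ around the point $a$. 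So the crux reduces to showing $\mathbb{P}\{|S - a| \le \varepsilon\}$ is controlled suitably, in a scale-invariant way (the whole statement is invariant under $Q \mapsto sQ$, $a \mapsto s a$, so one should normalize, say $\sum_j \lambda_j^2 = 1$ or $\max_j|\lambda_j| = 1$).

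Since the paper flags that the lemma is proved ``by a Fourier-analytic method,'' I would implement the anticoncentration bound via the characteristic function of $S$. One has $\mathbb{P}\{|S-a| \le \varepsilon\} \lesssim \varepsilon \int_{|\xi| \le 1/\varepsilon} |\widehat{\mu_S}(\xi)|\, d\xi$ (smoothing with a fixed bump), and the characteristic function of a noncentral quadratic form in Gaussians factorizes explicitly: $|\widehat{\mu_S}(\xi)| = \prod_j (1 + 4\lambda_j^2 \xi^2)^{-1/4}$ (the Gaussian mean shifts $c_j$ only add a phase and are harmless for the modulus, and the location $a$ also contributes only a phase). The task is then to bound $\int_{\mathbb{R}} \prod_j (1+4\lambda_j^2\xi^2)^{-1/4}\, d\xi$ in terms of the relevant scale. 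This integral is largest when the $\lambda_j$ spectrum is concentrated — in the extreme single-eigenvalue case it is $\int (1+4\lambda^2\xi^2)^{-1/4} d\xi$, which diverges, signalling that one cannot hope for $\mathbb{P}\{|S - a| \le \varepsilon\} \lesssim \varepsilon$ uniformly; rather one gets a weaker modulus of continuity, which is exactly why \eqref{eq:lmain} has the form $C/t$ (tail in $t$) and not $C/t^2$. I would therefore combine the numerator bound $\|Q(g+b)\| \le \|Q\|_{\op}\|h+c\|$ with a two-regime split on the denominator: if $|S-a|$ is not too small one is done directly, and the probability that $|S-a|$ is smaller than a threshold $\delta$ is estimated by the Fourier-smoothing inequality above, choosing $\delta$ as a function of $t$ to balance the two contributions against the fluctuations of $\|h+c\|$ (which concentrate around $\sqrt{n}$ with Gaussian tails, contributing only lower-order terms).

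The main obstacle I anticipate is obtaining the anticoncentration bound for $S$ with the \emph{correct scale}, uniformly over all eigenvalue configurations $\Lambda$ and all shifts $a$, $c$. The difficulty is genuinely structural: the natural scale against which $|S-a|$ should be measured is not $\|Q\|_{\op}$ alone (the single-eigenvalue case shows $S-a$ can be uniformly small in a $\|Q\|_{\op}$-normalized sense) but something like $\|Q\|_{\F} = \sqrt{\sum_j \lambda_j^2}$, and reconciling this with the numerator — which is naturally $\|Q\|_{\op}$-sized — is precisely where the interplay between the ``number of effective directions'' and the spectral spread of $Q$ enters. I expect the proof to handle this by carefully tracking how $\int \prod_j(1+4\lambda_j^2\xi^2)^{-1/4}d\xi$ depends on the multiset $\{\lambda_j\}$ (it is controlled by $\|Q\|_{\F}^{-1}$ up to logarithmic or constant factors once at least two eigenvalues are comparably large, with the degenerate near-rank-one case absorbed into the weaker $1/t$ tail), and then feeding this back through \eqref{eq:key}; the shifts $b$ and $a$ should drop out cleanly since they only affect phases of the characteristic function, but verifying this rigorously (especially that the shift cannot help the adversary by \emph{increasing} anticoncentration) will require a small argument.
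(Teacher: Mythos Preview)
Your diagnosis of the obstacle is accurate, but your proposed cure --- decoupling numerator from denominator and controlling the latter by one-variable anticoncentration --- does not close the gap. Take the rank-one case $Q = \lambda e_1 e_1^t$ with $b=0$, $a=0$. The true ratio is $|\lambda||g_1|/(|\lambda|g_1^2) = 1/|g_1|$, whose tail is $\asymp 1/t$; this comes entirely from the \emph{correlation} between numerator and denominator. Your scheme discards that correlation: the best one-variable anticoncentration for $S = \lambda g_1^2$ is $\mathbb{P}\{|S| \leq \varepsilon\} \asymp \sqrt{\varepsilon/|\lambda|}$, and optimising the split threshold $\delta$ against any deterministic numerator bound yields at best a $t^{-1/2}$ tail (with an $n$-dependent prefactor if you use $\|Q\|_{\op}\|g\|$). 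The same loss persists whenever one or two eigenvalues dominate, so this is not a corner case that can be ``absorbed into the weaker $1/t$ tail'' --- it is precisely where the approach breaks.

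The paper's argument differs in two essential respects. First, it peels off up to two dominant coordinates (those maximising $\mathcal{E}_j^2(1+b_j^2)$) and treats each via the direct rank-one calculation of Claim~\ref{claim1}, which keeps the numerator--denominator correlation intact. Second, and more importantly, for the remaining part --- where by construction at least three comparable terms survive --- it does not use the one-variable characteristic function of $Y$ but the \emph{joint} characteristic function $\chi(\xi,\eta) = \mathbb{E}\,e^{i(\xi X + \eta Y)}$ of $X = \sum_{j>r} \mathcal{E}_j^2(g_j+b_j)^2$ and $Y = \sum_j \mathcal{E}_j (g_j+b_j)^2$. The ratio $\sqrt{X}/|Y-a|$ is then bounded through the two-dimensional integral identity of Claim~\ref{claim2} (built on the majorant $\exp(-y^2/(4\epsilon^2 x))$), followed by a contour shift in $\xi$ and H\"older estimates. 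This joint, two-variable treatment is the substantial idea your plan is missing.

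A smaller correction: the shifts $c_j$ do not contribute ``only a phase''. From $\mathbb{E}\,e^{i\alpha(h+\beta)^2} = (1-2i\alpha)^{-1/2}\exp\!\big(i\alpha\beta^2/(1-2i\alpha)\big)$ one finds the modulus factor $\exp\!\big(-2\alpha^2\beta^2/(1+4\alpha^2)\big)$, which is real and $\leq 1$. Your inequality $|\widehat{\mu_S}(\xi)| \leq \prod_j(1+4\lambda_j^2\xi^2)^{-1/4}$ therefore survives, but the paper actually exploits this extra decay --- both in the ordering by $\mathcal{E}_j^2(1+b_j^2)$ and in the final H\"older bounds --- rather than discarding it.
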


The estimate \eqref{eq:vec} follows in the GOE case, by combining  \eqref{eq:key} with  Lemma~\ref{lemma:main} (through conditioning on $g_0$ and $Q$).

\medskip

\noindent {\bf The complex (GUE) case.}  Here  \eqref{eq:A + V} is replaced by
\begin{equation}\label{eq:A + V'}
A + V  = \frac{1}{\sqrt{N}} \left(  \begin{array}{cc}
g_0 +a    & (g+b) ^*\\
g+b & W + D
\end{array}\right),
\end{equation}
where $g_0,a\in\R$, $g, b\in\C^{N-1}$ and $W$ and $D$ are $(N-1)\times(N-1)$
Hermitian matrices, $g_0, g$ and $W$ are independent, $a,b$ and $D$ are deterministic, $g_0$ is a standard real Gaussian and $g$ is a standard complex Gaussian vector (i.e., with independent entries having independent real and imaginary parts, each of which has the normal distribution with mean $0$ and variance $1/2$). Following the same steps as in the GOE case one arrives at
\begin{equation}\label{eq:GUE_case_fixed_vector_expression}
\begin{split} \frac{1}{\sqrt{N}} \| (A + V)^{-1} e_1 \|
&= \frac{\sqrt{1 + \| Q (g+b)\|^2}}{|g_0 + a - (g+b)^* Q (g+b)|}\leq \frac{1 + \| Q (g+b)\|}{|g_0 + a - (g+b)^* Q (g+b)|}
\end{split}
\end{equation}
where $Q:=(W + D)^{-1}$.\\

To conclude the proof via
the arguments used in the GOE case,
we rewrite the right-hand side of \eqref{eq:GUE_case_fixed_vector_expression}
in terms of a similar expression involving only real quantities.
For this purpose we consider $\mathbb{C}^N$ with the
standard basis $(e_j)_{j=1}^N$ as a vector
space over $\mathbb{R}$ with the basis
\[ (e_1, ie_1, e_2, ie_2, \cdots, e_N, ie_N)~, \]
 and denote by $\tilde{Q}$ be  the $2N\times 2N$ real symmetric matrix which represents
multiplication by $Q$ in this basis.
For a vector $v\in\C^N$, denote by $\tilde{v}\in\R^{2N}$
its image under this identification. Then
\begin{equation}
  \|Q(g+b)\| = \|\tilde{Q}(\tilde{g}+\tilde{b})\|
\end{equation}
and, using that $(g+b)^*Q(g+b)$ is real as $Q$ is Hermitian, that
\begin{equation}
  (g+b)^*Q(g+b) = (\tilde{g}+\tilde{b})^t\tilde{Q}(\tilde{g}+\tilde{b}).
\end{equation}
Thus
\begin{equation}\label{eq:complex_to_real_1}
  \frac{1 + \| Q (g+b)\|}{|g_0 + a - (g+b)^* Q (g+b)|} = \frac{1 + \| \tilde{Q} (\tilde{g}+\tilde{b})\|}{|g_0 + a - (\tilde{g}+\tilde{b})^t \tilde{Q} (\tilde{g}+\tilde{b})|}.
\end{equation}
Note that $\tilde{g}$ is a real Gaussian vector whose entries are independent with variance $1/2$. In order to work with standard real Gaussian vectors we rewrite this expression
as
\begin{equation}\label{eq:complex_to_real_2}
  \frac{1 + \| \tilde{Q} (\tilde{g}+\tilde{b})\|}{|g_0 + a - (\tilde{g}+\tilde{b})^t \tilde{Q} (\tilde{g}+\tilde{b})|} = \frac{\sqrt{2}\big(1 + \| \frac{\tilde{Q}}{\sqrt{2}} (\sqrt{2}\tilde{g}+\sqrt{2}\tilde{b})\|\big)}{|\sqrt{2}g_0 + \sqrt{2}a - (\sqrt{2}\tilde{g}+\sqrt{2}\tilde{b})^t \frac{\tilde{Q}}{\sqrt{2}} (\sqrt{2}\tilde{g}+\sqrt{2}\tilde{b})|}~,
\end{equation}
where $\sqrt{2}\tilde{g}$ is standard Gaussian.
Using \eqref{eq:complex_to_real_1} and \eqref{eq:complex_to_real_2} with \eqref{eq:GUE_case_fixed_vector_expression} allows to finish the proof in the GUE case with the same argument as in the GOE case.\qed

\begin{rmk} Note that we actually proved the following stronger, conditional version of (\ref{eq:vec}): for $\varphi = e_1$,
the estimate (\ref{eq:vec}) holds conditionally on the sub-matrix
obtained by deleting the first row and column of $V$. For a general
$\varphi$, this translates to the following estimate, which will be
of use in the sequel:
 \begin{equation}\label{eq:fixed_vector_bound_with_conditioning}
    \mathbb{P}\left\{\|H^{-1}\vp\|\ge t\sqrt{N}\|\vp\|\,\Big|\, \left\{ u^*Hv \, \mid \, u, v \perp \varphi\right\} \right\} \le \frac{C}{t},
  \end{equation}
with a constant $C$ which is uniform in $A$, $N$ and $\vp$.
\end{rmk}

\section{Frobenius norm bound} \label{s:frob}
To  deduce the Frobenius norm estimate \eqref{eq:frob}
 from   \eqref{eq:vec}, we employ the following principle. A similar strategy
 was employed by Sankar, Spielman, and Teng \cite[Proof of Theorem 3.3]{SST}
 \begin{lemma}\label{l:qf1}
Let $Q$ be an $N\times N$ real symmetric matrix and  $\vp$ be a random
vector uniformly distributed on the sphere $\mathbb{S}^{N-1} =
\{\psi\in\R^N\colon\|\psi\|=1\}$. Then
\begin{equation*}
\mathbb{P} \left\{ \|Q \vp\| \leq \frac{\epsilon}{\sqrt{N}}\|Q\|_{\F}
\right\} \leq 5 \epsilon, \quad \epsilon>0.
\end{equation*}
\end{lemma}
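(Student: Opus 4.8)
The plan is to combine the rotational symmetry of the problem with a Laplace-transform (Chernoff) bound, reducing the estimate to a one-dimensional Gaussian anti-concentration inequality; the conceptual point is that among all $Q$ of a prescribed Frobenius norm, the rank-one matrices constitute the extremal (worst) case for the event under consideration. Concretely, I would first use rotation invariance of the law of $\varphi$ (diagonalize $Q=O^t\Lambda O$ and note that $\|Q\varphi\|$ has the same distribution as $\|\Lambda\varphi\|$) to assume $Q=\operatorname{diag}(\lambda_1,\dots,\lambda_N)$; assuming also $Q\neq0$, write $\varphi=g/\|g\|$ with $g$ a standard real Gaussian vector, so that $Z:=\|Q\varphi\|^2=\sum_i\lambda_i^2 g_i^2/\|g\|^2$ and $\|Q\|_{\F}^2=\sum_i\lambda_i^2$, whence the claim becomes $\mathbb{P}\{Z\le\epsilon^2\|Q\|_{\F}^2/N\}\le5\epsilon$.

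The structural observation driving the argument is that $Z$, being a function of the direction $g/\|g\|$ alone, is independent of $\|g\|^2\sim\chi^2_N$. Consequently, for every $\lambda>0$,
\[ \mathbb{E}\bigl[(1+2\lambda Z)^{-N/2}\bigr]=\mathbb{E}\bigl[e^{-\lambda Z\|g\|^2}\bigr]=\prod_{i=1}^N\mathbb{E}\bigl[e^{-\lambda\lambda_i^2 g_i^2}\bigr]=\prod_{i=1}^N(1+2\lambda\lambda_i^2)^{-1/2}, \]
where the first equality uses the Laplace transform $\mathbb{E}[e^{-s\chi^2_N}]=(1+2s)^{-N/2}$ together with the independence just noted.

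Since $z\mapsto(1+2\lambda z)^{-N/2}$ is decreasing, Markov's inequality applied with $u:=\epsilon^2\|Q\|_{\F}^2/N$ gives $\mathbb{P}\{Z\le u\}\le(1+2\lambda u)^{N/2}\prod_i(1+2\lambda\lambda_i^2)^{-1/2}$. I would then take $\lambda=1/(2Nu)$, so that $(1+2\lambda u)^{N/2}=(1+1/N)^{N/2}\le e^{1/2}$ while $2\lambda\lambda_i^2=\lambda_i^2/(\epsilon^2\|Q\|_{\F}^2)$. The heart of the matter is the remaining product: the map $(x_i)\mapsto\sum_i\log\bigl(1+x_i/(\epsilon^2\|Q\|_{\F}^2)\bigr)$ is concave on the simplex $\{x_i\ge0,\ \sum_i x_i=\|Q\|_{\F}^2\}$, hence attains its minimum at an extreme point $x=\|Q\|_{\F}^2\,e_j$, where it equals $\log(1+1/\epsilon^2)$; applying this with $x_i=\lambda_i^2$ yields $\prod_i(1+2\lambda\lambda_i^2)^{-1/2}\le(1+1/\epsilon^2)^{-1/2}\le\epsilon$. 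Combining the two factors, $\mathbb{P}\{Z\le u\}\le e^{1/2}\epsilon<5\epsilon$, as desired.

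The step I expect to be the main obstacle is precisely this convexity reduction, together with recognizing why it is needed. A crude bound retaining only the largest eigenvalue — using $\|Q\varphi\|\ge\|Q\|_{\op}\,|\varphi_j|$ for the corresponding coordinate $j$ and then the anti-concentration $\mathbb{P}\{|\varphi_j|\le\delta\}\lesssim\delta\sqrt N$ of a single coordinate of a uniform direction — costs a spurious factor $\sqrt N$, so all eigenvalues must be processed simultaneously. The Laplace-transform identity above, combined with the observation that the extremal configuration is rank-one (which is exactly the case in which the single-coordinate estimate, with $\delta=\epsilon/\sqrt N$, is sharp and is what the bound ultimately reduces to), is what delivers an $N$-independent constant.
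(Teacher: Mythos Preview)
Your proof is correct and follows essentially the same strategy as the paper: represent $\varphi=g/\|g\|$, apply a Chernoff-type bound, compute the resulting Gaussian expectation as $\prod_j(1+c\,\mathcal{E}_j^2)^{-1/2}$, and control that product via $\prod_j(1+a_j)\ge 1+\sum_j a_j$ (which is exactly what your concavity-on-the-simplex argument encodes).

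The one genuine difference is in the treatment of $\|g\|$. The paper uses the exponential test function $e^{-\xi\|Q\varphi\|^2}$ and handles the norm by the crude truncation $\mathbb{P}\{\|g\|^2\le 2N\}\ge 1/2$, picking up an extra factor of $2$; you instead exploit the independence of $Z$ and $\|g\|^2$ together with the exact Laplace transform $\mathbb{E}e^{-s\chi^2_N}=(1+2s)^{-N/2}$ to obtain the identity $\mathbb{E}[(1+2\lambda Z)^{-N/2}]=\prod_j(1+2\lambda\,\mathcal{E}_j^2)^{-1/2}$ and apply Markov with the test function $(1+2\lambda z)^{-N/2}$. This is a clean variant and yields the sharper constant $e^{1/2}\approx 1.65$ in place of the paper's $2\sqrt{2}\,e^{1/2}\approx 4.66$.
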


\begin{proof}
By the Chebyshev inequality, for any real $\xi$,
\begin{equation}\label{eq:Q_applied_to_u_estimate}
  \mathbb{P} \left\{ \|Q \vp\| \leq \frac{\epsilon}{\sqrt{N}}\|Q\|_{\F}
\right\} \le
\exp\left(\frac{\xi\epsilon^2}{N}\|Q\|_{\F}^2\right)\mathbb{E}
\exp\left(-\xi\|Q\vp\|^2\right)  \, .
\end{equation}
A uniformly distributed vector on $\mathbb{S}^{N-1} $ can be generated by letting $\vp = g /\|g\|$ with $g$ a standard real Gaussian vector, for which
$\frac{g}{\|g\|}$ and $\|g\|$ are independent.  Thus,
\begin{equation}\begin{split}\label{eq:uniform_and_Gaussian_Laplace_transform}
\mathbb{E} \left[
\exp\left(-\xi\|Q\vp\|^2\right)  \right] & =    \mathbb{E} \left[ \exp\left(-\xi\|Q g\|^2/ \|g\|^2 \right)  \right]  \\
& = \frac{1}{\mathbb{P}\left\{\|g\|^2\le 2N\right\}}\mathbb{E} \left[\exp\left(-\xi\|Q g\|^2/ \|g\|^2\right)\mathbbm{1}_{\|g\|^2\le 2N}\right]
\\ & \le \  2\  \mathbb{E}\exp\left[-\frac{\xi}{2N}\|Qg\|^2\right],
  \end{split}\end{equation}
where use was made of the bound $\mathbb{P}(\|g\|^2\le 2N)\ge \frac{1}{2}$
which follows from $\mathbb{E}\|g\|^2 = N$.

Let $\{ {\mathcal E}_j\} $ be the eigenvalues of $Q$, with which
 $\|Q\|_{\F}^2 = \sum
{\mathcal E}_j^2$.  As the distribution of $g$ is invariant under orthogonal
transformations, and the eigenvectors of $Q$ form an orthonormal
basis, one gets (using a known Gaussian integral) for any
$\xi \geq 0$,
\begin{equation*}
  \mathbb{E}\exp\left(-\frac{\xi}{2N}\|Qg\|^2\right) = \mathbb{E}\exp\left(-\frac{\xi}{2N}\sum_{j=1}^N {\mathcal E}_j^2 g_j^2\right) =
  \prod_{j=1}^N \frac{1}{\sqrt{1+\frac{\xi}{N}{\mathcal E}_j^2}}\le \frac{1}{\sqrt{1+\frac{\xi}{N}\|Q\|_{\F}^2}}.
\end{equation*}
Juxtaposing the last inequality with
\eqref{eq:Q_applied_to_u_estimate} and
\eqref{eq:uniform_and_Gaussian_Laplace_transform}, and substituting
$\xi = \frac{N}{2\epsilon^2\|Q\|_{\F}^2}(1 - 2\epsilon^2)$, yields
\begin{equation*}
  \mathbb{P} \left\{ \|Q \vp\| \leq \frac{\epsilon}{\sqrt{N}}\|Q\|_{\F}
\right\} \le
\frac{2\exp\left(\frac{\xi\epsilon^2}{N}\|Q\|_{\F}^2\right)}{\sqrt{1+\frac{\xi}{N}\|Q\|_{\F}^2}}
= 2\sqrt{2}\epsilon \exp\left(\frac{1 - 2\epsilon^2}{2}\right)\le
5\epsilon.\qedhere
\end{equation*}
\end{proof}

We proceed to prove the Frobenius norm estimate \eqref{eq:frob} in the GOE case.
Let $\vp$ be a  random vector distributed uniformly on the sphere $\mathbb{S}^{N-1}$ and independent of $H$, and let
$t\geq1$.    Applying Lemma~\ref{l:qf1} with $Q = H^{-1}$
and $\epsilon=\frac{1}{10}$, we get
\begin{multline}\label{eq:from_Frobenius_to_fixed_vector}
\mathbb{P}\left\{\|H^{-1}\|_{\F}\ge  tN\right\}
  =  \mathbb{E} \left[\mathbbm{1}_{\|H^{-1}\|_{\F} \ge tN}\right]
  \leq \ 2\  \mathbb{E} \left[\mathbbm{1}_{\|H^{-1}\|_{\F}\ge tN}\mathbb{P}\left\{\|H^{-1} \vp\|\ge \frac{t\sqrt{N}}{10}\,\Big|\,
  H\right\}\right] \\
   \leq  \ 2\    \mathbb{E} \left[ \mathbb{P}\left\{\|H^{-1} \vp\|\ge \frac{t\sqrt{N}}{10}\,\Big|\,
 H\right\}\right] \ \leq \ 2\   \mathbb{P}  \left\{\|H^{-1} \vp\|\ge \frac{t\sqrt{N}}{10}\right\}  \,.
\end{multline}
Applying now the fixed vector bound  \eqref{eq:vec} conditionally on $\vp$  to the  probability in the last  term one gets
\be
\mathbb{P}\left\{\|H^{-1}\|_{\F}\ge  tN\right\}  \ \leq \ \frac{20\, C}{t} \, ,
\ee
i.e., \eqref{eq:frob} holds in the real  (GOE) case. \\

A similar  argument may be used to establish \eqref{eq:frob} in the
GUE case using the following complex analog to Lemma~\ref{l:qf1}. If
$Q$ is an $N\times N$ Hermitian matrix and $\vp $ is a random vector
uniformly distributed on the complex sphere,
\begin{equation}\label{eq:complex_unit_sphere}
\mathbb{S}^{N-1}_{\mathbb{C}} = \{\psi \in\C^N\colon\|\psi \|=1\},
\end{equation}
then, for all $\epsilon>0$,
\begin{equation}\label{eq:complex_version_of_uniform_vector_lemma}
  \mathbb{P} \left\{ \|Q \vp\| \leq \frac{\epsilon}{\sqrt{N}}\|Q\|_{\F}
\right\} \leq 5 \epsilon.
\end{equation}
The  inequality follows from Lemma~\ref{l:qf1} applied with $2N$ in place of $N$ by
identifying the space $\C^N$ with $\R^{2N}$ as in the proof of the GUE case of (\ref{eq:vec}).  This identification
multiplies the Frobenius norm by $\sqrt{2}$.

\section{Bound on the density of states}\label{s:dos}

We now turn to the density of states bound~\eqref{eq:dos}.

Let $H$ be the random matrix of Theorem~\ref{thm:main}.  Observe that  almost surely $H$ has only simple eigenvalues, e.g., as its distribution is absolutely continuous with respect to that of the underlying invariant Gaussian ensemble (GOE or GUE) and these are well known to have this property~\cite{Mehta}.

For a finite interval $I$, let $\{ I_{j,M}\}$ be a nested sequence of partitions of $I$ into   subintervals whose maximal length tends to zero as $M\to \infty$.   Using the simplicity of the spectrum, almost surely:
\be \label{eq:number_of_eigenvalues_formula}
\sum_{j}  \mathbbm{1} \{ \text{$H$ has an eigenvalue in $I_{j,M}\cap I$}\}
\  \mathop {\nearrow}_{M\to \infty}   \  \#\{\text{eigenvalues of $H$ in $I$}\}  \,.
\ee
Taking the expectation value and applying the monotone convergence theorem
gives
\begin{equation}
\mathbb{E}\left[\#\{\text{eigenvalues of $H$ in $I$}\}\right] = \lim_{M\to\infty} \sum_{j}  \mathbb{P} \left \{ \text{$H$ has an eigenvalue in $I_{j,M}\cap I$} \right\} .
\end{equation}
  The probabilities on the right may be estimated through the norm bound \eqref{eq:frob}, which implies that for any interval $J= [{\mathcal E} - \varepsilon, {\mathcal E} + \varepsilon ]$
 \begin{equation}
 \mathbb{P} \left \{  \text{$H$ has an eigenvalue in $J$} \right\}   =
 \mathbb{P} \left \{  \| (H-{\mathcal E})^{-1}\|_{\op} \geq \frac{2}{ |J| } \right\} \leq  C N |J| / 2 \,.
 \end{equation}
Upon summation this yields the claimed density of states bound \eqref{eq:dos}.

\section{Minami-type bound}\label{sec:min}

The proof of Theorem~\ref{thm:Minami_like} proceeds by induction on $k$, using an idea of
Combes, Germinet, and Klein \cite{CGK}. The case $k=1$ is exactly the Wegner-type estimate
\eqref{eq:dos}. Thus we assume that (\ref{eq:Minami_expectation}) is valid for a certain $k$
and prove that it is also valid for $k+1$.

In the proof we use the inequality (\ref{eq:fixed_vector_bound_with_conditioning}), which we restate
for convenience. Letting $H = A + V$ be as in Theorem~\ref{thm:main}, $\vp\in\R^N\setminus\{0\}$, denote by $H_{\vp}$ the matrix obtained by restricting $H$
  to the subspace orthogonal to $\vp$, i.e.\ the $(N-1) \times (N-1)$ matrix
  $H_\varphi = P_{\varphi^\perp} H P_{\varphi^\perp}^*$, where $P_{\varphi^\perp}$ is the orthogonal
  projection onto the orthogonal complement of $\varphi$.  Then (\ref{eq:fixed_vector_bound_with_conditioning})
  asserts that
   \begin{equation}\label{eq:fixed_vector_bound_with_conditioning'}
    \mathbb{P}\left\{\|H^{-1}\vp\|\ge t\sqrt{N}\|\vp\|\,\Big|\, H_{\vp}\right\} \le \frac{C}{t},
  \end{equation}
  with a constant $C$ which is uniform in $A$, $N$ and $\vp$.

  Let $H$ be as in Theorem~\ref{thm:main} and fix $I_1$ to be a finite
  interval. Let $\vp$ be a random vector, independent of $H$, which is uniformly distributed on the unit sphere $\mathbb{S}^{N-1}$ in the real case
  or uniformly distributed on the complex unit sphere $\mathbb{S}^{N-1}_{\mathbb{C}}$ (see \eqref{eq:complex_unit_sphere}) in the complex
  case. Lemma~\ref{l:qf1} in the real case or its complex
  version \eqref{eq:complex_version_of_uniform_vector_lemma} in the complex case,
  imply that for every non-negative random variable $X$, measurable
  with respect to $H$, and every $\mathcal E\in\R$ one has
  \begin{equation}\label{eq:expectation_via_uniform_vector}
  \begin{split}
    \mathbb{E}[X] &\le 2\mathbb{E}\left[X\cdot \mathbb{P}\left\{\|(H - \mathcal E)^{-1}\vp\|\ge \frac{\|(H -
    \mathcal E)^{-1}\|_{\F}}{10\sqrt{N}}\,\Big|\, H\right\}\right]\\
    &= 2\mathbb{E}\left[X\cdot \mathbbm{1}_{\|(H - \mathcal E)^{-1}\vp\|\ge \frac{\|(H -
    \mathcal E)^{-1}\|_{\F}}{10\sqrt{N}}}\right].
  \end{split}
  \end{equation}
  Now, let $\{ I_{j,M}\}$ be a nested sequence of partitions of $I_1$ into subintervals whose maximal length tends to zero as $M\to
  \infty$. As in Section~\ref{s:dos}, the monotone convergence theorem implies that
  \begin{multline}\label{eq:monotone_convergence_reduction}
    \mathbb{E}\left[\mathcal{N} (I_1)(\mathcal{N} (I_2)-1)_+\cdots(\mathcal{N}
    (I_{k+1})-k)_+\right]\\
     = \lim_{M\to\infty} \sum_j \mathbb{E}\left[\mathbbm{1}_{\mathcal{N} (I_{j,M}\cap I_1)\ge 1}\,(\mathcal{N} (I_2)-1)_+\cdots(\mathcal{N}
    (I_{k+1})-k)_+\right].
  \end{multline}
  We focus on estimating a single summand in the last expression.
  Let $J\subseteq I_1$ be an interval with midpoint $\mathcal E$.
The event that $\mathcal{N} (J)\ge 1$ coincides with $\|(H-\mathcal E)^{-1}\|_{\op}\ge \frac{2}{|J|}$. Applying
  \eqref{eq:expectation_via_uniform_vector},
  \begin{multline}\label{eq:adding_phi}
     \mathbb{E}\left[\mathbbm{1}_{\mathcal{N} (J)\ge 1}(\mathcal{N} (I_2)-1)_+\cdots(\mathcal{N}
    (I_{k+1})-k)_+\right]\\
     \le 2\mathbb{E}\left[\mathbbm{1}_{\|(H-\mathcal{E})^{-1}\|_{\op} \ge \frac{2}{|J|}}(\mathcal{N} (I_2)-1)_+\cdots(\mathcal{N}(I_{k+1})-k)_+\mathbbm{1}_{\|(H - \mathcal{E})^{-1}\vp\|\ge \frac{\|(H -
    \mathcal E)^{-1}\|_{\F}}{10\sqrt{N}}}\right]\\
     \le 2\mathbb{E}\left[\mathbbm{1}_{\|(H-\mathcal E)^{-1}\vp\| \ge \frac{1}{5|J|\sqrt{N}}}(\mathcal{N} (I_2)-1)_+\cdots(\mathcal{N}
    (I_{k+1})-k)_+\right].
  \end{multline}
Let $H_{\vp}$ be as above, then the eigenvalues of $H_\vp$ interlace those of $H$, therefore
$\mathcal{N} (I_j)-1\le \mathcal{N}(I_j;H_\vp)$. Thus,
  \begin{equation}\label{eq:conditioned_on_phi_and_H_phi}
  \begin{split}
    &\mathbb{E}\left[\mathbbm{1}_{\|(H-\mathcal E)^{-1}\vp\| \ge \frac{1}{5|J|\sqrt{N}}}(\mathcal{N} (I_2)-1)_+\cdots(\mathcal{N}
    (I_{k+1})-k)_+\right]\\
    &\,\,\le \mathbb{E}\left[\mathbbm{1}_{\|(H-\mathcal E)^{-1}\vp\| \ge \frac{1}{5|J|\sqrt{N}}}\,\mathcal{N} (I_2;H_{\vp})\cdots(\mathcal{N}
    (I_{k+1};H_{\vp})-k+1)_+\right]\\
    &\,\,= \mathbb{E}\left[\mathcal{N} (I_2;H_{\vp})\cdots(\mathcal{N}
    (I_{k+1};H_{\vp})-k+1)_+\,\mathbb{P}\left\{\|(H-\mathcal E)^{-1}\vp\| \ge \frac{1}{5|J|\sqrt{N}}\,\Big|\,\vp,
    H_{\vp}\right\}\right]\\
    &\,\,\le 5C|J|N\cdot \mathbb{E}\left[\mathcal{N} (I_2;H_{\vp})\cdots(\mathcal{N}
    (I_{k+1};H_{\vp})-k+1)_+\right]~,
  \end{split}
  \end{equation}
  where in the last inequality we have applied the estimate
  \eqref{eq:fixed_vector_bound_with_conditioning'} to the matrix
  $H-\mathcal E$. By the invariance of the underlying Gaussian
  ensemble (GOE or GUE), the $(N-1)$-dimensional matrix
\[ \widetilde{H_\vp} = \sqrt{\frac{N}{N-1}} H_\vp~, \] conditioned on $\vp$, has the form treated in Theorem~\ref{thm:main}. Thus the estimate
  \eqref{eq:Minami_expectation}, applied using the induction hypothesis to $\widetilde{H_\vp}$, shows that
  \begin{equation}\begin{split}\label{eq:Minami_N-1}
    \mathbb{E}\left[\mathcal{N} (I_2;H_{\vp})\cdots(\mathcal{N}
    (I_{k+1};H_{\vp})-k+1)_+\right] &\le \prod_{j=2}^{k+1} \left(C_0 |I_j| \sqrt{N(N-1)}\right) \\&\leq \prod_{j=2}^{k+1}(C_0|I_j|N)~.
\end{split}
\end{equation}
  Putting together \eqref{eq:adding_phi}, \eqref{eq:conditioned_on_phi_and_H_phi} and
  \eqref{eq:Minami_N-1} shows that
  \begin{equation*}
    \mathbb{E}\left[\mathbbm{1}_{\mathcal{N} (J)\ge 1}(\mathcal{N} (I_2)-1)_+\cdots(\mathcal{N}
    (I_{k+1})-k)_+\right]\le 10C|J|N\times \prod_{j=2}^{k+1} (C_0 |I_j| N)~.
  \end{equation*}
  Taking $C_0\ge 10C$, the theorem follows by plugging the last estimate back into
  \eqref{eq:monotone_convergence_reduction} and performing the
  summation.

\section{Ratio of quadratic forms} \label{sec:tech}

Let us recall from Section~\ref{lem:fixed_vec} that the above results hinge on the estimate stated in Lemma~\ref{lemma:main}.
The statement to be proved is that for any  (non-random) non-zero real symmetric matrix $Q$, real vector $b$, real number $a$ and $t\ge 1$,
\be  \label{eq:ratio}
\mathbb{P} \left\{ \frac{\|Q(g+b)\|}{|(g+b)^tQ(g+b)-a|} \geq t \right\} \leq \frac{C}{t},
\ee
where $g$ is a standard real Gaussian vector and $C$ is an absolute constant. \\

That such a bound may hold may be surmised from the observation that
\[ \mathbb{E}  \|Q(g+b)\|^2 \leq C \, \operatorname{Var}
[(g+b)^tQ(g+b)-a] \]
(uniformly in $Q$, $b$, and $a$), which implies that   the denominator  of the ratio in \eqref{eq:ratio} fluctuates on a scale which is not smaller than the typical size of the numerator.    However, more careful analysis is needed to take into account the   dependence of the two terms and the possibility that the denominator has unbounded probability density at  small values.

\medskip

We turn to the proof of  Lemma~\ref{lemma:main}, starting with two preliminary claims. The first covers its rank one case.
\begin{claim}\label{claim1}
If $h$ is a standard Gaussian variable, $a,b \in \mathbb{R}$, then
\begin{equation*}
\mathbb{P}\left \{ \frac{|h + b|}{|(h + b)^2 - a|} \geq t \right\} \leq \sqrt{\frac{8}{\pi}} \frac1t,\quad t\geq1.
\end{equation*}
\end{claim}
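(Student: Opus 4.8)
The plan is to reduce the two-dimensional random variable $(h+b)$ to a one-dimensional problem by the substitution $u = h+b$, so that we must control the distribution of the ratio $|u|/|u^2-a|$ where $u$ is Gaussian with mean $b$ and variance $1$. The key observation is that $|u|/|u^2-a| \ge t$ forces $u$ to lie in a region that is either a bounded interval around the origin (when $a>0$, from the branch $|u| \le $ something) or a neighborhood of the two points $\pm\sqrt a$, and in each case the Lebesgue measure of the bad set is $O(1/t)$. One then bounds the Gaussian probability of that set by $\|\text{density}\|_\infty = 1/\sqrt{2\pi}$ times its Lebesgue measure.

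More concretely, I would first dispose of the case $a \le 0$: then $u^2 - a \ge u^2$, so $|u|/|u^2-a| \le |u|/u^2 = 1/|u|$, and the event is contained in $\{|u| \le 1/t\}$, an interval of length $2/t$, whose Gaussian probability is at most $\frac{1}{\sqrt{2\pi}}\cdot\frac{2}{t}$. For $a>0$, write $s=\sqrt a$ and analyze $f(u) = |u|/|u^2-a| = |u|/(|u-s||u+s|)$. For $t\ge 1$ this function exceeds $t$ only on a union of intervals whose total length I would estimate directly: near $u = s$ one needs $|u-s| \lesssim |u|/(t|u+s|)$, and since near that point $|u|/|u+s| \approx 1/2$, the interval around $s$ has length $\lesssim 1/t$; symmetrically near $u=-s$; and there is no contribution near $u=0$ when $t \ge 1$ because there $f(u) \approx |u|/a \to 0$. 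Summing the (at most two) intervals of length $O(1/t)$ and multiplying by $1/\sqrt{2\pi}$ gives the bound $\sqrt{8/\pi}\,\tfrac1t$ after tracking the constants carefully; a uniform-in-$a$ estimate is obtained because the length bounds do not depend on $s$ (one can rescale $u = s v$ to see the relevant quantity $\sup_v |v|/|v^2-1|$ restricted to where it is $\ge t$ has length $O(1/t)$ uniformly, and rescaling back multiplies lengths by $s$ but the Gaussian density is still bounded by $1/\sqrt{2\pi}$ — so actually cleanest is to avoid rescaling and just solve the quadratic inequalities directly).

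The cleanest execution avoids case analysis on the sign of $u$: set $w = u^2 \ge 0$, note that conditional on $|u|$ the two solutions $\pm|u|$ each carry Gaussian density at most $\frac{1}{\sqrt{2\pi}}$, so $\mathbb P\{f(u)\ge t\} \le \frac{2}{\sqrt{2\pi}}\cdot |\{r>0 : r/|r^2-a| \ge t\}|$ where the set is measured in $r=|u|$. Then $r/|r^2-a|\ge t \iff |r^2 - a| \le r/t$, i.e.\ $r^2 - r/t - a \le 0$ and $r^2 + r/t - a \ge 0$; solving these two quadratics in $r$ gives an interval (or union with the complement) whose length is exactly $\frac{1}{t}$ times something bounded (the difference of the two relevant roots telescopes to $1/t$ up to lower-order terms), yielding length $\le 2/t$ uniformly in $a \ge 0$. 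Multiplying, $\mathbb P \le \frac{2}{\sqrt{2\pi}}\cdot\frac{2}{t} = \frac{4}{\sqrt{2\pi}\,t} = \sqrt{8/\pi}\,\frac1t$.

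The main obstacle I anticipate is getting the Lebesgue-measure bound on $\{r>0 : |r^2-a|\le r/t\}$ to be uniform in $a$ with the sharp constant: the set can consist of one or two intervals depending on whether $a < 1/(4t^2)$, and one must check both the small-$a$ regime (where the set is a single interval near $0$ of length $\lesssim 1/t$) and the large-$a$ regime (where it splits into two intervals near $\pm\sqrt a$, each of length $\lesssim 1/t$, but one must verify the two do not merge or that if they do the total is still $O(1/t)$). Carefully solving $r^2 \mp r/t - a = 0$ and differencing the roots shows each interval has length at most $1/t$ (the $\mp r/t$ perturbation shifts the root $\sqrt a$ by exactly $1/(2t)$ in each direction to leading order, rigorously bounded using $\sqrt{a+x}-\sqrt{a} \le \sqrt x$ type estimates), so the total is at most $2/t$ in all cases, completing the argument.
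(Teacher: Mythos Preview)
Your approach is correct and essentially matches the paper's: reduce to $a \geq 0$, show the event forces $h+b$ into a set of Lebesgue measure $O(1/t)$, and bound by the Gaussian density $1/\sqrt{2\pi}$ times that measure. The paper's execution is slicker than solving the quadratics: it factors
\[
|(h+b)^2 - a| = \big||h+b|-\sqrt a\,\big|\cdot\big||h+b|+\sqrt a\,\big| \ge \big||h+b|-\sqrt a\,\big|\cdot|h+b|,
\]
so the event $\frac{|h+b|}{|(h+b)^2-a|}\ge t$ immediately gives $\big||h+b|-\sqrt a\,\big| \le 1/t$, a set of length at most $4/t$ in $h+b$, whence $\frac{4}{\sqrt{2\pi}\,t} = \sqrt{8/\pi}\,\frac1t$ --- your worries about interval merging and the small-$a$ regime never arise. (Incidentally, your root computation actually yields length \emph{exactly} $1/t$ for the set in $r=|h+b|$, since the relevant roots are $\frac{\pm 1/t + \sqrt{1/t^2+4a}}{2}$; the ``lower-order terms'' and the $2/t$ bound are unnecessary slack.)
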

\begin{proof} The event $\frac{|h + b|}{|(h + b)^2 - a|} \geq t$ coincides with
\begin{equation}\label{eq:gaussian}
\frac{|h + b|}{t} \geq |(h + b)^2 - a|.
\end{equation}
If $a < 0$, the probability of this event will only increase if we replace $a$ with $0$, thus we suppose that $a \geq 0$. Then
\begin{equation*}
|(h + b)^2 - a| = \big||h + b| - \sqrt{a}\big|\cdot\big||h + b| + \sqrt{a}\big| \geq \big||h + b| - \sqrt{a}\big|\cdot|h + b|,
\end{equation*}
whence
\begin{equation*}
\begin{split}
\mathbb{P}\left \{\frac{|h + b|}{|(h + b)^2 - a|} \geq t\right\}&\le \mathbb{P}\left \{\frac{|h + b|}{t} \geq \big||h + b| - \sqrt{a}\big|\cdot|h + b|\right\} \\
&= \mathbb{P}\left \{\big||h + b| - \sqrt{a}\big|\le \frac{1}{t}\right\} \le \frac{4}{\sqrt{2\pi}}\frac{1}{t}.\qedhere
\end{split}
\end{equation*}
\end{proof}
The next claim will be used in deriving probability  bounds on ratios through estimates on the Fourier transform of the joint probability distribution of the numerator and denominator (also known as the joint characteristic function).

\begin{claim}\label{claim2} Let $X > 0$, $Y$ be a pair of random variables, and
\be
\chi(\xi,\eta)  := \mathbb{E} \exp{(i(\xi X + \eta Y))} \, .
\ee
Then, for any $\epsilon > 0$ and $a \in \mathbb{R}$,
\begin{equation}\label{estimate_lemma3}
\mathbb{P}\left\{ \frac{\sqrt{X}}{|Y - a|}\geq \epsilon^{-1}\right\}
\leq \frac{e^{1/4}\epsilon }{4\pi} \liminf_{\delta \to +0} \int
d\eta \left| \int { d\xi \frac{\chi(\xi,\eta)}{(\eta^2\epsilon^2 +
i\xi + \delta)^{\frac{3}{2}+\delta}} } \right|.
\end{equation}
\end{claim}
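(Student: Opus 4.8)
The plan is to express the indicator of the event $\{\sqrt{X}/|Y-a| \ge \epsilon^{-1}\} = \{X \ge \epsilon^{-2}(Y-a)^2\}$ as a (regularized) contour integral in a variable dual to $X$, then insert the characteristic function and integrate. First I would reduce to $a=0$ by replacing $Y$ with $Y-a$ (this only shifts the $\eta$-integral by a harmless phase, or one simply carries $a$ along). The event becomes $\{X - \epsilon^{-2}Y^2 \ge 0\}$, or equivalently $\{\epsilon^2 X - Y^2 \ge 0\}$ after multiplying by $\epsilon^2>0$. The key analytic input is a Laplace/Fourier representation of the Heaviside function: for suitable $s>0$ one has an identity of the shape
\[
\mathbbm{1}\{s \ge 0\} \le \text{const} \cdot s^{1/2+\delta}\, e^{-s}\cdot\frac{1}{2\pi}\int d\xi\, \frac{e^{i\xi s}}{(1+i\xi+\delta)^{3/2+\delta}},
\]
valid for $s\ge 0$ with $\delta>0$ a regularization parameter; here one uses that $s\mapsto s^{1/2+\delta}e^{-s}$ is bounded (its maximum is at $s=1/2+\delta$, giving the $e^{1/4}$-type constant in the limit) together with the fact that $\int d\xi\, (1+i\xi+\delta)^{-3/2-\delta}e^{i\xi s}$ equals a positive, absolutely convergent integral for $s>0$ (the exponent $3/2+\delta>1$ ensures absolute integrability, so no principal value issues arise). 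Applying this with $s = \epsilon^2 X - Y^2$ (which is a.s.\ real, and on the event is $\ge 0$, while off the event the right side is still nonnegative since the $\xi$-integral is real and one checks positivity/uses $X>0$), I would bound
\[
\mathbbm{1}\{\epsilon^2 X \ge Y^2\} \le \frac{e^{1/4}}{2\pi}\liminf_{\delta\to+0}\Big| \int d\xi\, \frac{e^{i\xi(\epsilon^2 X - Y^2)}}{(i\xi+\delta)^{3/2+\delta}}\Big|,
\]
where the $e^{-s}$ factor has been replaced by shifting the contour (equivalently absorbing it), leaving $(i\xi+\delta)^{3/2+\delta}$ in the denominator; the $\liminf$ and the absolute value are inserted precisely so that this manipulation is legitimate without tracking the exact contour.

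Next I would take expectations and use Fubini/Fatou: since for each $\delta>0$ the $\xi$-integrand is dominated by an integrable function uniformly (the $(i\xi+\delta)^{-3/2-\delta}$ decay), one can move $\mathbb{E}$ inside, producing
\[
\mathbb{E}\, e^{i\xi\epsilon^2 X - i\xi Y^2}.
\]
To linearize the quadratic term $Y^2$ in the exponent I would use the Gaussian (Hubbard--Stratonovich) identity $e^{-i\xi Y^2} = \frac{1}{\sqrt{4\pi i \xi}}\int d\eta\, e^{i\eta Y}\, e^{-\eta^2/(4i\xi)}$, valid for $\xi$ with a small positive real part $\delta$ (which is exactly why the regularization $\delta$ was kept), so that the resulting Gaussian in $\eta$ is genuinely convergent. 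Substituting and swapping the $\eta$- and $\xi$-integrals, the expectation over $(X,Y)$ collapses to $\chi(\xi\epsilon^2,\eta)$; after the change of variables $\xi \mapsto \xi/\epsilon^2$ (which turns $e^{-\eta^2/(4i\xi)}$ into $e^{-\eta^2\epsilon^2/(4i\xi)}$ and, combined with the $(i\xi+\delta)^{3/2+\delta}$ and the $1/\sqrt{4\pi i \xi}$ factor, assembles the denominator $(\eta^2\epsilon^2 + i\xi+\delta)^{3/2+\delta}$ up to constants) one arrives at the stated right-hand side with its constant $e^{1/4}\epsilon/(4\pi)$ and the $\liminf_{\delta\to+0}$ outside.

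The main obstacle I expect is making the contour-integral representation of $\mathbbm{1}\{s\ge 0\}$ rigorous, with the correct inequality direction and the correct handling of the branch of the fractional power $(\,\cdot\,)^{3/2+\delta}$: one must check that the candidate integral is real and nonnegative for all real $s$ (not just $s\ge 0$), that it dominates the indicator on $\{s\ge0\}$ with the advertised constant, and that all the interchanges of integration order and the passage of $\mathbb{E}$ inside are justified — the $\delta>0$ regularization and the $\liminf$/absolute-value on the outside are the devices that let one do this cleanly, by keeping every intermediate integral absolutely convergent and only removing the regularization at the very end via Fatou. The Gaussian linearization of $Y^2$ and the bookkeeping of the various $\sqrt{4\pi i\xi}$ constants is routine by comparison but must be done carefully to land exactly on $(\eta^2\epsilon^2+i\xi+\delta)^{3/2+\delta}$.
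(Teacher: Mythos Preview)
Your first key step is false as stated. The inner $\xi$-integral $\frac{1}{2\pi}\int d\xi\,\frac{e^{i\xi s}}{(i\xi+\delta)^{3/2+\delta}}$ evaluates (by the standard Gamma-function Fourier pair) to $\Gamma(\tfrac32+\delta)^{-1}\,s^{1/2+\delta}e^{-\delta s}\,\mathbbm{1}_{s>0}$; thus your proposed bound $\mathbbm{1}\{s\ge 0\}\le \frac{e^{1/4}}{2\pi}\big|\int d\xi\,e^{i\xi s}(i\xi+\delta)^{-3/2-\delta}\big|$ has right-hand side behaving like $s^{1/2}$ as $s\to 0^+$, so it fails near the boundary of the event. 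Any attempt to dominate $\mathbbm{1}_{s\ge 0}$ by a nice function of the single variable $s=\epsilon^2 X-Y^2$ arising from such a contour integral will run into this problem. Your subsequent ``assembling'' step is also incorrect: after the Hubbard--Stratonovich and the substitution $\xi\mapsto\xi/\epsilon^2$ one is left with a factor $e^{-\eta^2\epsilon^2/(4i\xi)}\,(i\xi+\delta)^{-3/2-\delta}(4\pi i\xi)^{-1/2}$, which is \emph{not} a constant multiple of $(\eta^2\epsilon^2+i\xi+\delta)^{-3/2-\delta}$ (check e.g.\ $\xi=1$, $\eta^2\epsilon^2=0$ versus $\eta^2\epsilon^2=4$).

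The paper avoids both issues by working in two variables from the outset: it dominates the indicator by $h(x,y)=\exp\!\big(-y^2/(4\epsilon^2 x)\big)\mathbbm{1}_{x>0}$, observing that on the event $\{y^2\le \epsilon^2 x\}$ one has $h\ge e^{-1/4}$ (this is the source of the constant $e^{1/4}$; note $h$ does not vanish on the boundary). After the regularization $h_\delta(x,y)=h(x,y)e^{-\delta x}x^\delta$, the two-dimensional Fourier transform is computed by a Gaussian integral in $y$ followed by a Gamma integral in $x$, giving exactly $2\sqrt{\pi}\,\Gamma(\tfrac32+\delta)\,\epsilon\,(\eta^2\epsilon^2+i\xi+\delta)^{-3/2-\delta}$. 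Plancherel for the Fourier--Stieltjes transform then yields $\mathbb{E}h_\delta(X,Y)$ as the double integral against $\chi(\xi,\eta)$, and Fatou handles the limit $\delta\to 0^+$. Your Hubbard--Stratonovich idea is morally the $y$-Gaussian step read backwards, but the order of operations you propose --- collapsing to a function of $s$ first --- loses exactly the information needed for the bound.
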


\begin{proof} The right-hand side of (\ref{estimate_lemma3}) does not change if we replace $Y$ with $Y-a$, therefore we can assume that $a=0$. Set
\[
h(x,y) = \exp\left( -\frac{y^2}{4\epsilon^2 x}\right)
\mathbbm{1}_{x> 0}, \quad h_\delta(x, y) = h(x, y) \exp(-\delta x)
x^\delta,
\]
and note that
\[ h(x,y) \geq e^{-\frac{1}{4}}\mathbbm{1}_{\frac{\sqrt{x}}{|y|}\geq\epsilon^{-1}}\mathbbm{1}_{x> 0}.\]
Therefore by the Chebyshev inequality and the Fatou lemma,
\[
\mathbb{P}\left\{ \frac{\sqrt{X}}{|Y|}\geq \epsilon^{-1}\right\}
\leq e^{\frac{1}{4}}\mathbb{E}h(X, Y) \leq e^{\frac{1}{4}}\liminf_{\delta
\to +0} \mathbb{E}h_\delta(X, Y).\]
The function $h_\delta$ is continuous and
 integrable, and its Fourier transform $\hat{h}_\delta$ is
also integrable, as follows from the explicit computation below.
Therefore, by a version of the Plancherel theorem for the
Fourier--Stieltjes transform \cite[\S VI.2]{Katz},
\[ \mathbb{E}h_\delta(X, Y) = \left(\frac{1}{2\pi}\right)^2\iint d\xi d\eta\,   \widehat{h}_\delta (\xi, \eta)\chi(\xi, \eta),
\]
where
\[ \widehat{h}_\delta(\xi, \eta) = \iint h_\delta(x, y)\exp(-i(\xi x + \eta y)) dx dy.\]
To compute $\widehat{h}_\delta$  we first fix $x > 0$ and integrate over $y$ (using a standard Gaussian integral)
\[
\int_{-\infty}^\infty h(x,y)\, \exp(-i\eta y) dy =
\int_{-\infty}^\infty \exp\left[-\frac{y^2}{4\epsilon^2 x} - i\eta
y\right] dy = 2\sqrt{\pi x}\, \epsilon \, \exp(-\eta^2\epsilon^2 x).
\]
Multiplying by $e^{-\delta x} x^\delta$ and integrating over $x$,
\[
\widehat{h}_\delta(\xi, \eta) = 2\sqrt{\pi}\epsilon \int_{0}^\infty x^{\frac12 + \delta} \exp(-x(\eta^2\epsilon^2 + i\xi + \delta)) dx \  =\
\frac{2\sqrt{\pi}\Gamma(\frac{3}{2} + \delta)\epsilon }{(\eta^2\epsilon^2 + i\xi + \delta)^{\frac{3}{2}+\delta}}~.
\]
This implies
\[ \left(\frac{1}{2\pi}\right)^2\iint d\xi d\eta\,   \widehat{h}_\delta (\xi, \eta)\chi(\xi, \eta) =
\frac{\Gamma(\frac{3}{2}+\delta) \epsilon}{2\pi^{3/2}} \iint d\xi
d\eta\, (\eta^2\epsilon^2 + i\xi + \delta)^{-\frac{3}{2}-\delta} \,
\chi(\xi, \eta).\] Applying the   Fubini theorem and taking absolute
value, we finally obtain:
\[\mathbb{P}\left\{ \frac{\sqrt{X}}{|Y|}\geq \epsilon^{-1}\right\} \leq \frac{e^{\frac{1}{4}}\epsilon}{4\pi} \liminf_{\delta \to +0}  \int d\eta\, \left| \int d\xi\, \frac{\chi(\xi,\eta)}{(\eta^2\epsilon^2 + i\xi + \delta)^{\frac{3}{2}+\delta}}\right|.\qedhere
\]
\end{proof}

\begin{proof}[Proof of Lemma~\ref{lemma:main}]
Using the symmetry which is built into the assumptions, it suffices to establish the bound for diagonal matrices $Q = \mathrm{diag}({\mathcal E}_1, {\mathcal E}_2, \cdots)$. Our goal is to prove that
\begin{equation}\label{eq:main_lemma_goal_with_lambda}
\mathbb{P} \left\{ \frac{\sqrt{\sum_{j\ge 1} {\mathcal E}_j^2(g_j +
b_j)^2}}{|\sum_{j \geq 1}{\mathcal E}_j (g_j + b_j)^2-a|} \geq t \right\}
\leq \frac{C}{t}, \quad t\geq 1,
\end{equation}
where the sums may be restricted to ${\mathcal E}_j \neq 0$ (and the probability average is over the independent standard Gaussian variables $g_j$). \\

We reorder the eigenvalues $({\mathcal E}_j)$ so that
\[ {\mathcal E}_1^2(1+b_1^2) \geq {\mathcal E}_2^2(1+b_2^2) \geq {\mathcal E}_3^2(1+b_3^2) \geq \cdots.\]

Denote
\begin{equation}\label{eq:r_def}
r := \begin{cases}
0, &{\mathcal E}_1^2(1+b_1^2) \leq \frac{1}{10}\sum_{j > 1}{\mathcal E}_j^2(1+b_j^2)\\
1, & {\mathcal E}_1^2(1+b_1^2) > \frac{1}{10}\sum_{j > 1}{\mathcal E}_j^2(1+b_j^2),\; {\mathcal E}_2^2(1+b_2^2) \leq \frac{1}{10}\sum_{j > 2}{\mathcal E}_j^2(1+b_j^2)\\
2, & \text{otherwise}
\end{cases}\end{equation}
and \be X := \sum_{j > r}{\mathcal E}_j^2(g_j + b_j)^2,\, \quad \, Y
:= \sum_{j \geq 1}{\mathcal E}_j (g_j + b_j)^2,  \, \quad \, \chi
(\xi, \eta) := \mathbb{E} \exp(i(\xi X + \eta Y))  \,, \ee where,
according to the number of non-zero eigenvalues, $X$ is either
identically zero or almost surely positive. Observe that
\begin{equation}\label{eq:twoparts}
\begin{split}
\sqrt{\sum_{j \geq 1}{\mathcal E}_j^2(g_j + b_j)^2} \leq
\sum_{j=1}^r |{\mathcal E}_j||g_j + b_j| + \sqrt{X}.
\end{split}
\end{equation}
For the terms in the first sum in the right-hand side of (\ref{eq:twoparts}), Claim~\ref{claim1} yields
\be
\mathbb{P} \left\{ \frac{|{\mathcal E}_j||g_j + b_j|}{|Y-a|} \geq t
\right\} \leq \sqrt{\frac{8}{\pi}}\frac{1}{t}, \quad t\geq 1. \ee
Thus, to prove \eqref{eq:main_lemma_goal_with_lambda} it suffices to
show that \be\label{eq:main_lemma_goal_with_r_and_lambda} \mathbb{P}
\left\{ \frac{\sqrt{X}}{|Y-a|} \geq t \right\} \leq \frac{C}{t},
\quad t\geq 1~. \ee If $X$ is identically zero the inequality is
trivial. Thus we assume that $X$ is not identically zero and note that this assumption entails that ${\mathcal E}_1, {\mathcal E}_2, {\mathcal E}_3\neq 0$. We now use Claim~\ref{claim2} which reduces the task of proving \eqref{eq:main_lemma_goal_with_r_and_lambda} to showing that
\begin{equation}\label{eq:lemma_reduced_to_integral}
 \liminf_{\delta' \to +0} \int  d\eta \left| \int { d\xi \frac{\chi(\xi,\eta)}{(\eta^2\epsilon^2 + i\xi + \delta')^{\frac{3}{2}+\delta'}} }\right|\le C\,.
\end{equation}
Noting that a standard Gaussian random variable $h$ satisfies
\begin{equation}\label{eq:Gaussian_Fourier_transform}
\mathbb{E} \exp(i\alpha (h+\beta)^2) = \frac{1}{\sqrt{1-2i\alpha}}\exp\left(\frac{i\alpha}{1-2i\alpha}\beta^2\right),
\end{equation}
we have
\be  \begin{split}
\chi (\xi, \eta)
&= \mathbb{E} \left[ \exp\left(i\bigg(\sum_{j=1}^r \eta {\mathcal E}_j (g_j + b_j)^2 + \sum_{j>r} (\xi{\mathcal E}_j^2+\eta{\mathcal E}_j)(g_j + b_j)^2\bigg)\right) \right] \\
&= \prod_{j=1}^r \frac{1}{\sqrt{1 - 2i\eta{\mathcal E}_j}}\exp\left( b_j^2 \frac{i\eta{\mathcal E}_j}{1 - 2i\eta{\mathcal E}_j}\right)\\
&\quad\times \prod_{j>r} \frac{1}{\sqrt{1 - 2i(\xi{\mathcal E}_j^2 + \eta{\mathcal E}_j)}} \exp \left( b_j^2\frac{i(\xi{\mathcal E}_j^2 + \eta{\mathcal E}_j)}{1 - 2i(\xi{\mathcal E}_j^2 + \eta{\mathcal E}_j)}\right).
\end{split}
\ee
For real $\eta$, the function $\chi(\cdot,\eta)$ has an analytic continuation to the domain
\begin{equation}\label{eq:acont}
\left\{ \xi - i \delta \, \mid \, \xi \in \mathbb{R}~, \, \delta < \frac{1}{2\max_{j>r} {\mathcal E}_{j}^2}\right\}~;
\end{equation}
this continuation is given by
 \begin{equation}\label{eq:acontformula}\begin{split}
\chi(\xi - i\delta,\eta) &= \prod_{j=1}^r \frac{1}{\sqrt{1 - 2i\eta{\mathcal E}_j}}\exp\left( b_j^2 \frac{i\eta{\mathcal E}_j}{1 - 2i\eta{\mathcal E}_j}\right)\\
&\times \prod_{j>r} \frac{1}{\sqrt{(1 - 2\delta{\mathcal E}_j^2) - 2i\zeta_j}} \exp \left( b_j^2\frac{\delta{\mathcal E}_j^2 + i\zeta_j }{(1 - 2\delta{\mathcal E}_j^2) - 2i\zeta_j}\right),
\end{split}\end{equation}
where we set
\begin{equation}\label{eq:zeta_j_def}
\zeta_j := \xi{\mathcal E}_j^2 + \eta{\mathcal E}_j.
\end{equation}
Due to the assumption that there are at least 3 non-zero eigenvalues, we have:
\begin{equation*}
  \iint  d\xi d\eta \frac{|\chi(\xi - i\delta,\eta)|}{(\xi^2 + \delta^2)^\frac{3}{4}}<\infty
 \quad \text{for} \quad 0<  \delta < \frac{1}{2\max_{j>r} {\mathcal E}_{j}^2}~.
\end{equation*}
Thus we may change the contour of integration and apply the dominated convergence theorem to obtain that
\begin{equation}\label{eq:contour_change}
\liminf_{\delta' \to +0}   \int  d\eta \left| \int { d\xi \frac{\chi(\xi,\eta)}{(\eta^2\epsilon^2 + i\xi + \delta')^{\frac{3}{2}+\delta'}} }\right|\le  \int  d\eta \int d\xi \frac{|\chi(\xi - i\delta,\eta)|}{(\xi^2 + \delta^2)^\frac{3}{4}}.
\end{equation}

We proceed to prove \eqref{eq:lemma_reduced_to_integral} by bounding the right-hand side of \eqref{eq:contour_change} for a suitable $\delta$.
Let
\begin{equation}\label{eq:nu_delta_def}
\nu^2:= \sum_{j>r} {\mathcal E}_j^2(1 + b_j^2)\quad\text{and}\quad \delta:=\frac{1}{10\nu^2}~,
\end{equation}
and observe that
\[ \delta \leq \frac{1}{10 \max_{j > r} \mathcal{E}_j^2}~. \]
Then from (\ref{eq:acontformula})
\[\begin{split}
|\chi(\xi - i\delta,\eta)| &= \prod_{j=1}^r \frac{1}{(1 + 4\eta^2{\mathcal E}_j^2)^{\frac{1}{4}}}\exp\left( -2b_j^2 \frac{\eta^2{\mathcal E}_j^2}{1 + 4\eta^2{\mathcal E}_j^2}\right)\\
&\times \prod_{j>r} \frac{1}{((1 - 2\delta{\mathcal E}_j^2)^2 + 4\zeta_j^2)^{\frac{1}{4}}} \exp \left( b_j^2\frac{\delta{\mathcal E}_j^2(1 - 2\delta{\mathcal E}_j^2) - 2\zeta_j^2 }{(1 - 2\delta{\mathcal E}_j^2)^2 + 4\zeta_j^2}\right).
\end{split}\]
Note that, for our choice \eqref{eq:nu_delta_def} of $\nu$ and $\delta$,
\begin{equation*}
  \frac{1}{((1 - 2\delta{\mathcal E}_j^2)^2 + 4\zeta_j^2)} \le \frac{1}{(1 - 2\delta{\mathcal E}_j^2)^2(1 + 4\zeta_j^2)}
  \leq \frac{\exp\left\{ 10 \delta \mathcal{E}_j^2 \right\}}{1 + 4\zeta_j^2}
\end{equation*}
and
\begin{equation*}
  \exp \left( b_j^2\frac{\delta{\mathcal E}_j^2(1 - 2\delta{\mathcal E}_j^2)}{(1 - 2\delta{\mathcal E}_j^2)^2 + 4\zeta_j^2}\right)\le \exp\left(\frac{\delta b_j^2{\mathcal E}_j^2}{1-2\delta{\mathcal E}_j^2}\right)\le \exp\left(10 \delta b_j^2{\mathcal E}_j^2\right)~;
\end{equation*}
consequently,
\begin{equation*}
  |\chi(\xi - i\delta,\eta)|\le e \prod_{j=1}^r \frac{1}{(1 + 4\eta^2{\mathcal E}_j^2)^{\frac{1}{4}}} \exp\left( -\frac{2 \eta^2 b_j^2 {\mathcal E}_j^2}{1 + 4\eta^2{\mathcal E}_j^2}\right)\prod_{j>r} \frac{1}{(1 + 4\zeta_j^2)^{\frac{1}{4}}}\exp \left(-\frac{2b_j^2\zeta_j^2}{1 + 4\zeta_j^2} \right)~.
\end{equation*}
Combining this bound with H\"older's inequality yields
\begin{equation}\label{eq:Fourier_transform_bound}\begin{split}
&\int  d\xi \int d\eta \frac{|\chi(\xi - i\delta,\eta)|}{(\xi^2 + \delta^2)^\frac{3}{4}}\\
 &\quad\leq e\prod_{j=1}^r\left(\iint \frac{d\xi}{{(\xi^2+\delta^2)^\frac{3}{4}}} \frac{d\eta}{(1+4\eta^2{\mathcal E}_j^2)^\frac{3}{4}}
 \exp\left( -\frac{6\eta^2 b_j^2 {\mathcal E}_j^2}{1 + 4\eta^2{\mathcal E}_j^2}\right) \right)^\frac{1}{3}\\
&\quad\times \left( \iint \frac{d\xi d\eta}{{(\xi^2+\delta^2)^\frac{3}{4}}} \prod_{j > r}\frac{1}{(1 + 4\zeta_j^2)^{\frac{3}{4(3 - r)}}}\exp\left[-\frac{6b_j^2}{3-r}\frac{\zeta_j^2}{1+ 4\zeta_j^2}\right] \right)^\frac{3-r}{3}\\
&\quad=:e\prod_{j=1}^r (I_j)^{\frac{1}{3}} \times (I')^{\frac{3-r}{3}}.
\end{split}\end{equation}
The first $r$ integrals satisfy
\begin{equation}\label{eq:first_r_integrals_bound}
I_j=\frac{1}{|{\mathcal E}_j|}\frac{1}{\sqrt{\delta}} \int \frac{d\xi}{(1
+\xi^2)^\frac{3}{4}}
 \int
 \frac{d\eta}{(1+4\eta^2)^\frac{3}{4}}\exp\left( - \frac{6b_j^2\eta^2}{1 + 4\eta^2}\right)
= \frac{C_1}{|{\mathcal E}_j|(1+|b_j|)\sqrt{\delta}}\le C_2,
\end{equation}
for absolute constants $C_1, C_2$, where the last inequality uses
the choice \eqref{eq:r_def} of $r$ and the definition
\eqref{eq:nu_delta_def} of $\nu$ and $\delta$.

It remains to estimate $I'$. An additional application of H\"older's inequality
with exponents
\[ \alpha_j = \frac{{\mathcal E}_j^2(1 + b_j^2)}{\nu^2}\]
shows that
\begin{equation}\label{eq:I_r_plus_1_evaluation}
\begin{split}
  I' &\le \prod_{j > r}\left( \iint \frac{d\xi d\eta}{\left(\xi^2+\delta^2\right)^{\frac{3}{4}}}  \left(1 + 4\zeta_j^2\right)^{-\frac{3}{4(3 - r)\alpha_j}} \exp\left[-\frac{\zeta_j^2}{1+ 4\zeta_j^2}\cdot\frac{2b_j^2}{\alpha_j}\right] \right)^{\alpha_j}\\
   &=:\prod_{j > r}\big(I_{j}\big)^{\alpha_j}~.
\end{split}
\end{equation}
We proceed to show that each of the $I_{j}$ is bounded by an absolute constant. Recalling the
definition \eqref{eq:zeta_j_def} of $\zeta_j$ and changing variables,
\begin{equation}\label{eq:I_r_plus_1_j_evaluation}
\begin{split}
  I_{j} &= \frac{1}{|{\mathcal E}_j|\sqrt{\delta}} \int \frac{d\xi}{\left(1+\xi^2\right)^{\frac{3}{4}}} \int \frac{d\zeta_j}{\left(1 + 4\zeta_j^2\right)^{\frac{3}{4(3 - r)\alpha_j}}} \exp\left[-\frac{\zeta_j^2}{1+ 4\zeta_j^2}\cdot\frac{2b_j^2}{\alpha_j}\right] \\
   &=\frac{C_3}{|{\mathcal E}_j|\sqrt{\delta}} \int \frac{d\zeta_j}{\left(1 + 4\zeta_j^2\right)^{\frac{3}{4(3 - r)\alpha_j}}}
   \exp\left[-\frac{\zeta_j^2}{1+ 4\zeta_j^2}\cdot\frac{2b_j^2}{\alpha_j}\right]
\end{split}
\end{equation}
for an absolute constant $C_3 > 0$. By the choice \eqref{eq:r_def}
of $r$ and \eqref{eq:nu_delta_def} of $\nu$ and $\delta$,
\begin{equation*}
  \frac{3}{4(3 - r)\alpha_j}= \frac{3\nu^2}{4(3 - r){\mathcal E}_j^2(1+b_j^2)} \ge \frac{3}{4}
  \quad \text{for all $j>r$}~,
\end{equation*}
whence, splitting the domain of integration into $|\zeta_j|<1$ and $|\zeta_j| \geq 1$,
\begin{equation*}
  \int \frac{d\zeta_j}{\left(1 + 4\zeta_j^2\right)^{\frac{3}{4(3 - r)\alpha_j}}}
   \exp\left[-\frac{\zeta_j^2}{1+ 4\zeta_j^2}\cdot\frac{2b_j^2}{\alpha_j}\right]
   \leq \frac{C_4 \sqrt{\alpha_j}}{\max(1, |b_j|)}
   \le \frac{C_5|{\mathcal E}_j|}{\nu}
\end{equation*}
for absolute constants $C_4,C_5>0$. Plugging the result into
\eqref{eq:I_r_plus_1_j_evaluation} and then into
\eqref{eq:I_r_plus_1_evaluation} shows that $I'$ is bounded by an
absolute constant. Combining with the bounds
\eqref{eq:first_r_integrals_bound} and plugging into
\eqref{eq:Fourier_transform_bound} and \eqref{eq:contour_change}, we
conclude that
 \eqref{eq:lemma_reduced_to_integral}
 holds, and therefore so does Lemma~\ref{lemma:main}.
\end{proof}

\section{Discussion} \label{sec:disc}
\noindent{\bf Sharpness of the estimates}
The key step in our discussion of the invertibility properties of $A+V$, for a fixed Hermitian, real or complex, matrix $A$,  and  a random perturbation  $V$ sampled from a corresponding Gaussian random matrix ensemble,  was  the fixed vector bound \eqref{eq:vec}.
It may be of interest  to note that up to multiplicative constant \eqref{eq:vec}  is saturated in two very different situations:
\begin{enumerate}
\item  $A\  =\ 0 $ (or slightly more generally $ A=\mathcal E \, \mathbbm{1}$, with $|\mathcal E|<2$).   In this
case,
$\| H^{-1} \vp\| $ is typically of the order of the contribution of the closest eigenfunction, and for that, typically:
\be
\text{dist} (0, \text{spec} (H))  
\asymp 1/N \, \quad  \mbox{and} \qquad  |(\vp, \Psi_1)| \asymp
1/\sqrt{N} \,, \ee where $\Psi_1$ is the eigenfunction of eigenvalue
closest to $\mathcal E$.
\item   $A =  N^{1/2 + \varepsilon } \  P_{\vp} ^\perp $,
with $P_{\vp} $ the orthogonal projection on the space spanned by ${\vp} $ and  $P_{\vp} ^\perp$   its orthogonal complement.
Perturbation theory   allows to conclude that in this case, typically:
\be
\text{dist} (0, \text{spec} (H))
\asymp 1/\sqrt N \, \quad  \mbox{and} \qquad  |(\vp, \Psi_1)| \asymp 1 \, .
\ee
\end{enumerate}
\medskip
In both cases  $\| H^{-1} \vp\| $  is (typically) of the order of the most singular contribution,  which is
$     |(\vp, \Psi_1)|  \, \text{dist}^{-1}(0, \operatorname{spec} H) $, and hence
\be  \ \| H^{-1} \vp\|  \  \asymp \sqrt N\,  \ee
 up to a random factor whose distribution has $1/t$ tails.   However the  composition of this bound is quite different in the above two cases. \\

Note that, while in the above two cases $\| H^{-1} \vp\| $ is of the same order, the same cannot be said for the density of states at energy $0$: it scales as $N$ in the first case (i.e. up to a constant as \eqref{eq:dos}), but only as $\sqrt{N}$  in the second case.   \\

The Minami-type bound (\ref{eq:Minami_probability}) is not expected to be sharp 
since  one expects the eigenvalue repulsion to result in a higher power  on  the right-hand side of
\eqref{eq:Minami_probability} when $k\ge 2$ (namely, $k^2$ in the GUE case and $k(k+1)/2$ in the GOE case).

\noindent{\bf Weak disorder limit}    To probe the effects of weak disorder  one may consider operators of the form:
\be
H_{\lambda,N} \ = \ A_N \ +\  \lambda\,  V_N^{\text{GOE,GUE}} \, .
\ee
with $\lambda \geq 0 $  a parameter which allows to tune the strength of the disorder.
The bounds derived here share the property of the random-potential Wegner estimate, that at weak disorder the constants  degrade at the rate
$\lambda^{-1}$.   \\
\noindent{Question:}  Can the density of states bound for $H_{\lambda,N}$ be improved in case  the base operator $H_{0,N}=A_N$ is itself  asymptotically of  a bounded density of states?    \\
(The  question is open and of interest also in the original  Wegner case.)  \\

\noindent {\bf Wigner matrices} It is natural to consider extensions
of the bounds in Theorem~\ref{thm:main} to deformed Wigner matrices,
about which much has recently been learned~\cite{KY,LSSY}.  The
bounds cannot hold for any distribution of the entries:  in case
$V$ is a Wigner matrix with Bernoulli entries (uniformly sampled
from $\{ \frac{-1}{\sqrt N}, \frac{1}{\sqrt N}\}$)
and
 $\sqrt{N} A = e_1 e_1^* + M \sum_{j=2}^N e_j e_j^*$
\begin{equation*}
  \|(A + V)^{-1}\|_{\op}\to\infty\quad\text{as $M\to\infty$, on the event that $V_{11} = -\frac{1}{\sqrt{N}}$}.
\end{equation*}
In particular,  for the supremum over $N\times N$  real symmetric matrices we have  :
\begin{equation*}
  \sup_A \mathbb{P}(\|(A + V)^{-1}\|_{\op}\ge t)\ge \frac{1}{2}\quad\text{for any $t$},
\end{equation*}
in contrast to  \eqref{eq:frob}. Still, it seems reasonable to expect that bounds analogous to those presented in Theorem~\ref{thm:main} should hold when the entries of the Wigner matrix are sufficiently regular,
e.g.\ with probability densities  bounded by $\sqrt N$.\\  

\paragraph{Acknowledgment} We thank R.~Vershynin and R.~Koteck\'y for helpful suggestions.  

\smallskip
Parts of this work were done during the stays of JS and MS at
the Institute for Advanced Study in Princeton, of MA
at the Weizmann Institute of Science, Department of Mathematics and  Faculty of Physics, and of MA and SS at the Erwin Schr\"odinger International Institute for Mathematical Physics. We thank these institutions for their hospitality.

\smallskip
MA is supported in part by NSF grant PHY-1305472. RP is supported by an ISF grant and an IRG grant. JS is supported in part by The Fund For Math and NSF
grant DMS-0846325. MS is supported in part by the ISF grant 147/15. SS is supported in part by the European Research Council start-up grant 639305 (SPECTRUM).

\begin{flushright}
aizenman@princeton.edu,\\
peledron@post.tau.ac.il,\\
jeffrey@math.msu.edu,\\
mira.shamis@weizmann.ac.il,\\
sashas{\MVOne}@post.tau.ac.il
 \end{flushright}

\end{document}